\newcommand{\banacha}{X}
\newcommand{\banachb}{Y}
\newcommand{\banachh}{H}
\newtheorem{theorem}{Theorem}
\newtheorem{lemma}[theorem]{Lemma}
\newtheorem{definition}{Definition}
\newtheorem{corollary}[theorem]{Corollary}
\newtheorem{remark}{Remark}
\begin{document}

\title{Kantorovich's theorem on Newton's Method for solving generalized equations under the majorant condition}

\author{Gilson N. Silva \thanks{CCET/UFOB,  CEP 47808-021 - Barreiras, BA, Brazil (Email: {\tt  gilson.silva@ufob.edu.br}). The author was supported in part by CAPES .}    
}
\date{March 14, 2016}

\maketitle
\maketitle
\begin{abstract}
In this paper we consider a version of the Kantorovich's theorem for solving the generalized equation $F(x)+T(x)\ni 0$, where $F$ is a Fr\'echet derivative function and $T$ is a set-valued and maximal monotone acting between Hilbert spaces. We show that this method is  quadratically convergent to a solution of $F(x)+T(x)\ni 0$. We have used the idea of majorant function, which relaxes  the Lipschitz continuity of the derivative $F'$. It allows us to obtain the optimal convergence radius, uniqueness of solution and also to solving generalized equations under Smale's condition.
\end{abstract}
\medskip
 
\noindent
{\bf Keywords:} Generalized Equation, Kantorovich's theorem, Newton's  Method, Hilbert Spaces, Majorant Condition, Maximal Monotone Operator.

\medskip
\noindent {\bf Mathematical Subject Classification (2010):} Primary 90C30; Secondary 49J53. 
\maketitle

\section{Introduction}\label{sec:int}
In this paper we are interested in the solution of the generalized equation 
\begin{equation} \label{eq:ipi}
  F(x) +T(x) \ni 0, 
\end{equation}
where $F:{\Omega}\to \banachh$ is a Fr\'echet differentiable function, $\banachh$ is a Hilbert space, $\Omega\subseteq \banachh$ an open set and $T:\banachh \rightrightarrows  \banachh$ is set-valued and maximal monotone. As is well known, the generalized equation \eqref{eq:ipi} covers wide range of problems in classical analysis and its applications.  For instance, systems of nonlinear equations and abstract inequality systems. If $\psi: \banachh \to (-\infty, +\infty]$ is a proper lower semicontinuous convex function and 
$$
T(x)=\partial \psi(x) =\{u\in \banachh ~:~ \psi(y) \geq \psi(x) + \langle u,y-x\rangle\}, \qquad \forall~ y\in \banachh,
$$
then \eqref{eq:ipi} becomes the variational inequality problem 
$$
F(x) +\partial \psi(x) \ni 0,
$$
including linear and nonlinear complementary problems; additional comments about such problems can be found in \cite{ DontchevRockafellar2009, FerreiraSilva, josephy1979, Robinson1972_2, Uko1996, UkoArgyros2009}, and the references cited therein. 

Newton's method is one of the most important methods to solve \eqref{eq:ipi} which generates a sequence,  for  an  initial point $x_0$,  as follows
\begin{equation} \label{eq:ipi1}
  F(x_k) + F'(x_k)(x_{k+1}-x_k)+ T(x_{k+1}) \ni 0, \qquad k=0,1, \ldots.
\end{equation}
This method may be viewed as a Newton-type method based on a partial linearization, which has been studied in several papers including \cite{ Dontchev1996, PietrusJean2013};  see also \cite[ Section 6C]{DontchevRockafellar2009} where a interesting discussion about  iterative methods for solving generalized equations is presented. When $F\equiv 0$, the iteration \eqref{eq:ipi1} becomes the standard Newton's method for solving the nonlinear equation $F(x)=0.$  

L.~V. Kantorovich in \cite{Kantorovich1948}, established a convergence result to a solution of the Newton method for solving the equation $F(x)=0$ using conditions where the iterations begin. To be more specific, by imposing assumptions on the derivative $F'(x_0)$ and of the term $\|F'(x_0)^{-1}F(x_0)\|$ he obtained a convergence result to a solution of the Newton method. The main idea of Kantorovich, used in his proof, was the majorization principle of the sequence $x_k$ by a sequence of scalars. Recently, there has been some growing interest in Kantorovich's theorem, see for instance \cite{Ferreira2015, FerreiraSilva, FerreiraSvaiter2009, Potra2005, Wang1999}.

Up to our knowledge, S.~M. Robinson in \cite{Robinson1972_2}, was the first to consider a generalization of the Kantorovich theorem of the type $F(x)\in K$, where $K$ is a nonempty closed and convex cone, giving convergence results and error bounds for this method.  His idea was to use properties of convex process introduced by Rockafellar \cite{Rockafellar1970}, for more details and results about convex process see \cite{Robinson1972_1, Rockafellar1967}. In \cite{Robinson1972_2} the iterative $x_{k+1}$ is taken as the element of the set of points $x_{k+1}$ defined by \eqref{eq:ipi1} which is at minimal distance from $x_k$. Thus is obtained that the sequence $x_k$ converges quadratically to a solution of $F(x)\in K.$  

N.~H. Josephy in \cite{josephy1979} was the first to consider a semi local Newton's method of the kind \eqref{eq:ipi1},  in  order to solving (\ref{eq:ipi}),  to $F=N_C$ the normal cone mapping of a convex set $C\subset \mathbb{R}^{m}$.  For guarantee  the well definition of the method, in the theory of generalized equations, the property of {\it strong regularity} of $F(x_0)+F'(x_0)(x-x_0)+T(x)$ at $x_1$ for $0$, where $x_1$ is obtained from $x_0$, introduced by S.~M. Robinson in \cite{Robinson1980}, was used.   If $\banacha = \banachb$ and $F=\{ 0\},$ then strong regularity of $f(x_0)+f'(x_0)(x-x_0)+F(x)$ at $x_1\in \banacha$  for $0\in  \banacha$ is equivalent to assumption that   $f'(x_0)^{-1}$  is a continuous linear operator.  If  $\banacha=\mathbb{R}^{n}$, $\banachb=\mathbb{R}^{m}$ and $F=\mathbb{R}^{s}_{-}\times \{0\}^{m-s}$, then  strong regularity is equivalent to   Mangasarian-Fromovitz constraint qualification; see \cite[Example 4D.3]{DontchevRockafellar2009}.  An important case is when \eqref{eq:ipi} represents the  Karush-Kuhn-Tucker's  systems for  the standard nonlinear programming problem with a strict local minimizer, see  \cite{DontchevRockafellar2009} pag. 232. 

In \cite{chang2015, UkoArgyros2009} under a majorant condition and generalized Lipschitz condition, local and semi local convergence, quadratic rate and estimate of the best possible convergence radius of Newton's method as well as uniqueness of the solution for solving generalized equation were established. One usual assumption used to obtain quadratic convergence of Newton's method \eqref{eq:ipi1},   for solving equation \eqref{eq:ipi},  is  the Lipschitz continuity of  $F'$  in a neighborhood  the solution; see  \cite{DontchevRockafellar2009}.  Indeed,   keeping control of the derivative is an important point in the convergence analysis of Newton's method.  On the other hand, a couple of papers have dealt with the issue of convergence analysis of the Newton's method,  for solving  the equation $F(x)=0$,   by relaxing the assumption of Lipschitz continuity of $F'$, see for example \cite{FerreiraSvaiter2009, Wang1999, Zabrejko1987}, actually all this conditions 
are equivalent to X. Wang's condition  introduced in \cite{Wang1999}. The advantage of working with a majorant condition rests in the fact that it allow to unify several convergence results pertaining to  Newton's method; see  \cite{FerreiraSvaiter2009, Wang1999}; see Section~\ref{apl}.

In this paper we rephrase  the majorant  condition introduced in \cite{FerreiraSvaiter2009}, in order to study the local convergence properties of Newton's method \eqref{eq:ipi1}. The analysis presented provides a clear relationship between the majorant function and the function defining the generalized equation. Also, it allows us to obtain the optimal convergence radius for  the method with respect to the majorant condition and uniqueness of solution.  The analysis of this method, under Lipschitz's condition and Smale's condition,  are provided  as special case.

The organization of the paper is as follows. In Section~\ref{sec:int.1},  some notations and important results  used  throughout  the paper are presented. In Section \ref{lkant}, the main result is stated and  in Section~\ref{sec:PMF} properties of the majorant function,  the main  relationships   between the majorant function and the nonlinear operator are established. In  Section~\ref{convanalysis} the main result is proved, the uniqueness of the solution and some applications of this result are given in Section~\ref{apl}. Some final remarks are made in  Section~\ref{rf}.
\section{Preliminaries} \label{sec:int.1}
The following notations and results are used throughout our presentation. Let $\banachh$ be a Hilbert space with scalar product $\langle ., .\rangle$  and norm $\|.\|$, the {\it open} and {\it closed balls} at $x$ with radius $\delta\geq 0$ are denoted, respectively, by $ B(x,\delta) := \{ y\in X ~: ~\|x-y\|<\delta \}$ and  $B[x,\delta] := \{ y\in X ~: ~\|x-y\|\leqslant \delta\}.$ 

We denote by ${\mathscr L}(\banacha,\banachb)$ the {\it space consisting of all continuous linear mappings} $A:\banacha \to \banachb$ and   the {\it operator norm}  of $A$ is defined  by $  \|A\|:=\sup \; \{ \|A x\|~:  \|x\| \leqslant 1 \}.$ Recall that a bounded linear operator $G:\banachh \to \banachh$ is called a positive operator if $G$ is a self-conjugate and $\langle Gx,x\rangle \leq 0$ for each $x\in \banachh$. The {\it domain} and the {\it range} of $H$ are, respectively,  the sets $ \mbox{dom}~H:=\{x\in \banacha ~: ~ H(x)\neq \varnothing\} $ and $ \mbox{rge}~H:=\{y\in \banachb ~: ~ y \in H(x) ~\emph{for some} ~x\in \banacha\}$. The {\it inverse} of $H$  is the set-valued mapping  $H^{-1}:\banachb \rightrightarrows  \banacha$ defined by $ H^{-1}(y):=\{x \in \banacha ~: ~ y \in H(x)\}$.

\begin{definition}\label{def:pplm}
Let $\banachh$, be a Hilbert space, $\Omega$ be an open nonempty subset of $\banachh$,  $h: \Omega \to \banachh$ be a Fr\'echet derivative  with derivative  $h'$ and $T:\banachh \rightrightarrows  \banachh$ be a set-valued mapping. The {\it partial linearization} of the mapping   $h +T$ at $x\in \banachh$  is   the set-valued mapping   $L_h(x, \cdot ):\banachh \rightrightarrows  \banachh$ given by 
\begin{equation} \label{eq:pplm}
L_h(x, y ):=h(x)+h'(x)(y-x)+T(y).
\end{equation}
For each $x\in \banachh$, the inverse   $L_h(x, \cdot )^{-1}:\banachh \rightrightarrows  \banachh$ of the  mapping $L_h(x, \cdot )$  at $z\in \banachh$ is denoted  by
\begin{equation} \label{eq:invplm}
L_h(x, z )^{-1}:=\left\{y\in X ~:~ z\in h(x)+h'(x)(y-x)+T(y)\right\}.
\end{equation}
\end{definition}
\begin{remark}
If in above definition we have $T\equiv {0}$, $z=0$ and $h'(x)$ invertible, then $L_h(x, 0 )^{-1}=x-h'(x)^{-1}h(x)$ is the Newton iteration for solving the equation $h(x)=0$. 
\end{remark}

Now, we recall notions of monotonicity for set-valued operators.
\begin{definition}\label{def.mono}
Let $T:\banachh \rightrightarrows  \banachh$ be a set-valued operator. $T$ is said to be monotone if for any $x,y\in \mbox{dom}~{T}$ and, $u \in T(y)$, $v\in T(x)$ implies that the follwing inequality holds:
$$
\langle u-v,y-x \rangle \geq 0 .
$$
\end{definition}
A subset of $\banachh \times \banachh$ is monotone if it is the graph of a monotone operator. If $ \varphi: \banachh \to (-\infty, +\infty]$ is a proper function then the subdifferential of $\varphi$ is  monotone. 
\begin{definition}
Let $T:\banachh \rightrightarrows  \banachh$ be monotone. Then $T$ is maximal monotone if the following implication holds for all $x,u\in \banachh$:
\begin{equation}
\langle u-v,y-x \rangle \geq 0 \quad \mbox{for each} \quad y\in \emph{dom}{T} \quad \mbox{and}\quad v\in T(y) \Rightarrow \quad x\in \emph{dom}{T} \quad \emph{and} \quad v\in T(x).
\end{equation}
\end{definition}
An example of maximal monotone operator is the subdifferential of a proper, lower semicontinuous, convex function $ \varphi: \banachh \to (-\infty, +\infty]$.
The following result can de found in \cite{Wang2015}.
\begin{lemma}\label{eq:plm}
Let $G$ be a positive operator. The following statements about $G$ hold:
\begin{enumerate}
\item $\|G^2\|=\|G\|^2$;
\item If $G^{-1}$ exists,  then $G^{-1}$ is a positive operator.
\end{enumerate}
\end{lemma}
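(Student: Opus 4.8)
The plan is to lean on the two defining properties of a positive operator in the sense of the excerpt, namely self-conjugacy ($G^{*}=G$) and the sign condition $\langle Gx,x\rangle\le 0$ for all $x\in\banachh$; note that the particular sign chosen in the definition plays no role below, only that $\langle Gx,x\rangle$ is real with a fixed sign. For item (1), I would first record the trivial upper bound $\|G^{2}\|=\|G\cdot G\|\le\|G\|\,\|G\|=\|G\|^{2}$, valid by submultiplicativity of the operator norm for any bounded operators. The content is the reverse inequality, and here self-conjugacy is the essential tool: for any $x\in\banachh$ with $\|x\|\le 1$, write $\|Gx\|^{2}=\langle Gx,Gx\rangle=\langle G^{*}Gx,x\rangle=\langle G^{2}x,x\rangle$, and then estimate by Cauchy--Schwarz, $\langle G^{2}x,x\rangle\le\|G^{2}x\|\,\|x\|\le\|G^{2}\|$. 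Taking the supremum over the closed unit ball gives $\|G\|^{2}=\sup_{\|x\|\le 1}\|Gx\|^{2}\le\|G^{2}\|$, and combining the two bounds yields the claimed equality $\|G^{2}\|=\|G\|^{2}$.

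For item (2), assuming $G^{-1}$ exists as a bounded linear operator on $\banachh$, I would verify that it satisfies both defining properties in turn. Self-conjugacy of the inverse follows from the standard operator identity $(G^{-1})^{*}=(G^{*})^{-1}$, which together with $G^{*}=G$ gives $(G^{-1})^{*}=G^{-1}$. For the sign condition, fix an arbitrary $x\in\banachh$ and set $y:=G^{-1}x$, so that $x=Gy$. Then $\langle G^{-1}x,x\rangle=\langle y,Gy\rangle$, and since $G$ is self-conjugate the scalar $\langle Gy,y\rangle$ is real, whence $\langle y,Gy\rangle=\langle Gy,y\rangle\le 0$. Thus $\langle G^{-1}x,x\rangle\le 0$ for every $x$, so $G^{-1}$ inherits the sign condition and is again a positive operator.

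The argument is essentially routine Hilbert-space manipulation, so I do not expect a genuine obstacle; the only points requiring care are conceptual rather than computational. First, one should be clear about what ``$G^{-1}$ exists'' means: the proof of (2) uses that $G^{-1}$ is a bounded, everywhere-defined linear operator, which is the natural reading in this context (and follows from the closed graph theorem if $G$ is a bijection). Second, the step $(G^{-1})^{*}=(G^{*})^{-1}$ for invertible bounded operators should be invoked explicitly, since it is the one nontrivial ingredient beyond direct inner-product bookkeeping. With these understood, both items reduce to the short chains of equalities and inequalities sketched above.
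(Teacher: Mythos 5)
Your proof is correct, but there is no in-paper argument to compare it with: the paper states this lemma without proof, deferring to \cite{Wang2015} (just as it defers Lemma~\ref{eq:adjunt} to Lemma~2.2 of \cite{Uko1996}). Your item (1) is the standard argument: submultiplicativity gives $\|G^{2}\|\le\|G\|^{2}$, and for the reverse inequality the self-conjugacy identity $\|Gx\|^{2}=\langle G^{*}Gx,x\rangle=\langle G^{2}x,x\rangle$ followed by Cauchy--Schwarz and a supremum over the unit ball yields $\|G\|^{2}\le\|G^{2}\|$; as you observe, only self-conjugacy is used, so (1) in fact holds for every self-conjugate bounded operator and the sign hypothesis is idle there. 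Your item (2) is likewise the expected argument: $(G^{-1})^{*}=(G^{*})^{-1}=G^{-1}$ gives self-conjugacy of the inverse, and the substitution $x=Gy$ transfers the sign condition, with surjectivity of $G$ (implicit in the existence of an everywhere-defined bounded inverse, via the bounded inverse theorem) guaranteeing that every $x$ is reached. Two of your side remarks deserve emphasis. First, the ``$\leq 0$'' in the paper's definition of a positive operator is plainly a typo for ``$\geq 0$'' (otherwise Lemma~\ref{eq:adjunt}, which asserts $\langle Gx,x\rangle\geq\|x\|^{2}/\|G^{-1}\|$, would be inconsistent with the definition), and your proof, being sign-agnostic, is robust to this. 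Second, pinning down the reading of ``$G^{-1}$ exists'' as a bounded, everywhere-defined operator is exactly the care needed for the identity $(G^{-1})^{*}=(G^{*})^{-1}$ to be legitimate. In short: your argument is correct and complete, and it makes the paper self-contained at a point where it currently relies entirely on an external citation.
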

As a consequence of this result we have the following result:
\begin{lemma}\label{eq:adjunt}
Let $G$ be a positive operator. Suppose that $G^{-1}$ exists, then for each $x\in \banachh$ we have
$$
\langle Gx,x\rangle \geq \frac{\|x\|^2}{\|G^{-1}\|}.
$$
\end{lemma}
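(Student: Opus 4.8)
The plan is to reduce the claim to the generalized Cauchy--Schwarz inequality for the positive semidefinite symmetric bilinear form induced by $G$. First I would dispose of the trivial case $x=0$, where both sides vanish, and assume $x\neq 0$ from here on. By Lemma~\ref{eq:plm}(2) the inverse $G^{-1}$ is again a positive operator; in particular both $G$ and $G^{-1}$ are self-conjugate and satisfy $\langle G^{-1}u,u\rangle\geq 0$, which is what legitimizes the manipulations below.

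The key observation is that, since $G$ is self-conjugate with $\langle Gu,u\rangle\geq 0$ for all $u$, the map $(u,v)\mapsto\langle Gu,v\rangle$ is a positive semidefinite symmetric bilinear form and hence obeys the Cauchy--Schwarz inequality
$$
|\langle Gu,v\rangle|^2 \leq \langle Gu,u\rangle\,\langle Gv,v\rangle, \qquad u,v\in\banachh,
$$
which comes from the standard argument applied to the nonnegative quadratic $t\mapsto\langle G(u+tv),u+tv\rangle$. I would then specialize this inequality to the test vector $v=G^{-1}x$, the choice that makes the left-hand side collapse. Using $GG^{-1}=I$ and the self-conjugacy of $G$,
$$
\langle Gx,\,G^{-1}x\rangle=\langle x,\,GG^{-1}x\rangle=\langle x,x\rangle=\|x\|^2,
$$
so the left-hand side of Cauchy--Schwarz equals $\|x\|^4$. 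For the second factor, ordinary Cauchy--Schwarz together with the definition of the operator norm gives
$$
\langle G(G^{-1}x),\,G^{-1}x\rangle=\langle x,\,G^{-1}x\rangle\leq \|x\|\,\|G^{-1}x\|\leq \|G^{-1}\|\,\|x\|^2.
$$

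Combining the two displays yields $\|x\|^4\leq \langle Gx,x\rangle\,\|G^{-1}\|\,\|x\|^2$, and dividing through by $\|G^{-1}\|\,\|x\|^2>0$ produces the asserted bound $\langle Gx,x\rangle\geq\|x\|^2/\|G^{-1}\|$. The only genuine point to get right is the choice $v=G^{-1}x$, which is what forces the left side to become $\|x\|^2$; everything else is routine. I do not anticipate a real obstacle, beyond being careful that the generalized Cauchy--Schwarz inequality requires only positive semidefiniteness and self-conjugacy of $G$, both guaranteed here (the latter for $G^{-1}$ by Lemma~\ref{eq:plm}(2)).
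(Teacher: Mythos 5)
Your proof is correct, and it is self-contained, which is more than the paper itself offers: for this lemma the paper gives no argument at all, deferring to Lemma~2.2 of \cite{Uko1996}. The argument behind that citation (and the one that the placement of Lemma~\ref{eq:plm} immediately before the statement is designed to support) runs through the functional calculus: a positive invertible $G$ admits a positive square root, and then $\|x\|^2=\|G^{-1/2}G^{1/2}x\|^2\leq\|G^{-1/2}\|^2\,\|G^{1/2}x\|^2=\|G^{-1}\|\,\langle Gx,x\rangle$, the last equality coming from $\|B^2\|=\|B\|^2$ for positive $B$, i.e.\ Lemma~\ref{eq:plm}(1) applied to $B=G^{-1/2}$. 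Your route replaces this machinery with the generalized Cauchy--Schwarz inequality for the semidefinite form $(u,v)\mapsto\langle Gu,v\rangle$ together with the single well-chosen test vector $v=G^{-1}x$, and all the individual steps check out: $\langle Gx,G^{-1}x\rangle=\|x\|^2$ by self-conjugacy, $\langle x,G^{-1}x\rangle\leq\|G^{-1}\|\,\|x\|^2$ by ordinary Cauchy--Schwarz, and the division at the end is legitimate since $x\neq 0$ and $\|G^{-1}\|>0$. What the square-root proof buys is brevity given operator-theoretic background; what yours buys is elementarity --- no square roots, no spectral theory --- and strictly weaker effective hypotheses: your appeal to Lemma~\ref{eq:plm}(2) is in fact decorative, since positivity of $G^{-1}$ is never used anywhere in your argument (only existence and boundedness of $G^{-1}$, plus self-conjugacy and semidefiniteness of $G$ itself). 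One cosmetic point worth flagging: the paper's definition of positive operator contains the sign typo $\langle Gx,x\rangle\leq 0$; you correctly worked with $\geq 0$, which is what is intended and used throughout the paper.
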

\begin{proof}
See Lemma~2.2 of \cite{Uko1996}.
\end{proof}

Let $G:\banachh \to \banachh$ be a bounded linear operator. We will use the convention that $\widehat{G}:=\frac{1}{2}(G+G^*)$ where $G^*$ is the conjugate operator of $G$. As we can see, $\widehat{G}$ is a self-conjugate operator.
From now, we assume that $T:\banachh \rightrightarrows  \banachh$ is a set-valued  maximal monotone operator and $F: \banachh \to \banachh$ is a Fr\'echet derivative function. The next result is of major importance to prove the good definition of Newton's method. Its proof can be found in \cite[Lemma~1, p.189]{Smale1986}.
\begin{lemma}[Banach's lemma]\label{eq.banachlemma}
Let $B: \banachh \to \banachh$ be a bounded linear operator and $I:\banachh \to \banachh$ the identity operator. If $\|B-I\|<1$ then $B$ is invertible and $\|B^{-1}\|\leq 1/(1-\|B-I\|)$.
\end{lemma}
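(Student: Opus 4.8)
The plan is to prove this via the classical Neumann series argument. First I would set $A := I - B$, so that by hypothesis $\|A\| < 1$ and $B = I - A$. The idea is to construct the inverse explicitly as the series $S := \sum_{n=0}^{\infty} A^{n}$, which should formally satisfy $(I-A)^{-1} = \sum_{n\ge 0} A^{n}$ by analogy with the scalar geometric series $1/(1-a) = \sum_{n\ge 0} a^{n}$.

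The first substantive step is to show that this series converges in ${\mathscr L}(\banachh,\banachh)$. Here I would use submultiplicativity of the operator norm to obtain $\|A^{n}\| \le \|A\|^{n}$, so that the partial sums $S_{N} := \sum_{n=0}^{N} A^{n}$ form a Cauchy sequence: for $M > N$ one has $\|S_{M} - S_{N}\| \le \sum_{n=N+1}^{M} \|A\|^{n}$, which tends to $0$ because $\|A\| < 1$. Since $\banachh$ is a Hilbert (hence Banach) space, the operator space ${\mathscr L}(\banachh,\banachh)$ is complete, so the sequence $S_{N}$ converges to some $S \in {\mathscr L}(\banachh,\banachh)$.

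Next I would verify that $S$ is a genuine two-sided inverse of $B = I - A$. Computing the telescoping products gives $B S_{N} = (I-A)\sum_{n=0}^{N} A^{n} = I - A^{N+1}$, and likewise $S_{N} B = I - A^{N+1}$. Since $\|A^{N+1}\| \le \|A\|^{N+1} \to 0$, letting $N \to \infty$ and using continuity of operator composition yields $B S = S B = I$. Hence $B$ is invertible with $B^{-1} = S$.

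Finally, the norm estimate follows from the triangle inequality applied to the series, $\|B^{-1}\| = \|S\| \le \sum_{n=0}^{\infty} \|A\|^{n} = 1/(1 - \|A\|) = 1/\bigl(1 - \|B - I\|\bigr)$, which is exactly the claimed bound. The step requiring the most care is the convergence argument, i.e.\ ensuring that the formal geometric series genuinely defines a bounded operator via completeness of ${\mathscr L}(\banachh,\banachh)$; once that is in place, both the algebraic inversion identity and the norm estimate are routine.
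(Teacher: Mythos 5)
Your proof is correct and complete: the Neumann series construction, the Cauchy/completeness argument for convergence of the partial sums in ${\mathscr L}(\banachh,\banachh)$, the telescoping identity $B S_N = S_N B = I - A^{N+1}$, and the geometric-series norm bound together establish exactly the stated invertibility and the estimate $\|B^{-1}\|\leq 1/(1-\|B-I\|)$. The paper itself offers no proof, deferring to the reference \cite[Lemma~1, p.~189]{Smale1986}; your argument is the standard one found there, so you have simply supplied in full the classical reasoning the paper cites.
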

\section{Local analysis of Newton's method } \label{lkant}
In this section,  we study the Newton's method for solving the generalized equation \eqref{eq:ipi},  which is based in the partial linearization of this equation, see \cite{josephy1979} (see also, \cite{Dontchev1996}).  For study the convergence properties of this method, we assume that  the derivative $F'$ satisfies a weak Lipschitz condition on a region $\Omega$  relaxing  the usual Lipschitz condition. The statement of  the our main result is:
\begin{theorem}\label{th:nt}
Let $\banachh$ be a Hilbert space, $\Omega$ be an open nonempty subset of $\banachh$, $F: \Omega \to \banachh$ be continuous with Fr\'echet derivative $F'$ continuous, $T:\banachh \rightrightarrows  \banachh$ be a set-valued operator and $x_0\in \Omega$. Suppose that $F'(x_0)$ is a positive operator and $\widehat{F'(x_0)}^{-1}$ exists. Let $R>0$  and  $\kappa:=\sup\{t\in [0, R): B(x_0, t)\subset \Omega\}$. Suppose that there exist $f:[0,\; R)\to \mathbb{R}$ twice continuously differentiable such that  
  \begin{equation}\label{Hyp:MH}
\|\widehat{F'(x_0)}^{-1}\| \left\|F'(y)-F'(x)\right\| \leq f'\left(\|x-y\|+\|x-x_0\| \right)-f'\left(\|x-x_0\|\right),
  \end{equation}
  for all $x,y \in B(x_0, \kappa)$. Moreover, suppose that
	\begin{equation} \label{eq.ipoint1}
	\|x_1-x_0\|\leq f(0),
	\end{equation}
	and the following conditions hold,
	\begin{itemize}
  \item[{\bf h1)}]  $f(0)>0$ and $f'(0)=-1$;
  \item[{\bf  h2)}]  $f'$ is convex and strictly increasing.
	\item [{\bf  h3)}] $f(t)=0$ for some $t\in (0,\; R)$.
\end{itemize}
Then, $f$ has a smallest zero $t^*\in (0,\; R)$, the sequences generated by Newton's method for solving the generalized equation $F(x)+T(x)\ni 0$ and the equation  $f(t)=0$, with starting point $x_0$ and $t_0=0$, respectively,
\begin{equation} \label{eq:DNS}
 0\in F(x_k)+F'(x_k)(x_{k+1}-x_k)+T(x_{k+1}), \qquad t_{k+1} ={t_k}-f(t_k)/f'(t_k),\qquad k=0,1,\ldots\,, 
\end{equation}
are well defined, $\{t_k\}$ is strictly increasing, is contained in $(0, t^*)$ and converges to $t^*$, $\{x_k\}$ is contained in $B(x_0, t^*)$ and  converges to the point $x^*\in B[x_0, t^*]$ which is the unique solution of the generalized equation $F(x)+T(x)\ni 0$ in $B[x_0, t^*]$.  Moreover, the sequences $\{x_k\}$ and $\{t_k\}$ satisfies,
\begin{equation}\label{eq:q2}
   \|x_*-x_k\| \leq t_* -t_k, \qquad \qquad   \|x_*-x_{k+1}\| \leq \frac{t_*-t_{k+1}}{(t_* -t_k)^2}\|x_*-x_k\|^2,  
  \end{equation}
for all k=0,1,..., and the sequences $\{t_k\}$ and $\{x_k\}$ converge $Q$-linearly as follows
\begin{equation}\label{eq:rates0}
\|x_*-x_{k+1}\| \leq \frac{1}{2}\|x_* -x_k\|, \qquad \qquad  t_* -t_{k+1} \leq \frac{1}{2} (t_* -t_k) \qquad k=0,1, \ldots\ .
\end{equation}
If, additionally
\begin{itemize}
  \item[{\bf h4)}] $f'(t_*)<0$,
	\end{itemize}
	then the sequences, $\{t_k\}$ and $\{x_k\}$ converge $Q$-quadratically as follows
	\begin{equation}\label{ine.rates1}
	\|x_*-x_{k+1}\| \leq \frac{D^{-}f'(t_*)}{-2f'(t_*)}\|x_*-x_k\|^2, \qquad \qquad  t_* -t_{k+1} \leq \frac{D^{-}f'(t_*)}{-2f'(t_*)} (t_* -t_k)^2,
	\end{equation}
	for all $k=0,1, \ldots\ .$
\end{theorem}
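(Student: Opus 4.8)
The plan is to reduce the entire analysis to the scalar majorant function $f$ and then lift the scalar estimates to the Hilbert-space iteration, using the monotonicity of $T$ to absorb the set-valued part and the invertibility of $\widehat{F'(x_0)}$ to manufacture strong monotonicity. First I would carry out the purely scalar part. From \textbf{h1)}--\textbf{h3)} I would show that $f(t)>0$ and $f'(t)<0$ on $[0,t_*)$, that $t_*$ is the smallest zero, and — since $f'$ is strictly increasing with $f'(0)=-1$ — that $f'(s)<0$ for every $s<t_*$ (otherwise $f$ would turn increasing before reaching its first zero). Hence the scalar Newton map $n_f(t)=t-f(t)/f'(t)$ is well defined on $[0,t_*)$, maps it monotonically into itself, and $\{t_k\}$ with $t_0=0$ is strictly increasing and converges to $t_*$. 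The scalar templates I would record are $f(t_k)=f(t_k)-f(t_{k-1})-f'(t_{k-1})(t_k-t_{k-1})$ (from the defining relation $f'(t_{k-1})(t_k-t_{k-1})=-f(t_{k-1})$) and $t_*-n_f(t)=\bigl(f(t_*)-f(t)-f'(t)(t_*-t)\bigr)/(-f'(t))$; the convexity of $f'$ will bound the remainder appearing here and produce the rates in \eqref{eq:rates0} and, under \textbf{h4)}, \eqref{ine.rates1}.

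Second, I would establish that Newton's method is well defined on $B(x_0,t_*)$. For $x\in B(x_0,t_*)$, \eqref{Hyp:MH} together with $\|\widehat{F'(x)}-\widehat{F'(x_0)}\|\le\|F'(x)-F'(x_0)\|$ gives $\|\widehat{F'(x_0)}^{-1}\widehat{F'(x)}-I\|\le f'(\|x-x_0\|)+1<1$, so by Lemma~\ref{eq.banachlemma} the self-conjugate operator $\widehat{F'(x)}^{-1}$ exists with $\|\widehat{F'(x)}^{-1}\|\le \|\widehat{F'(x_0)}^{-1}\|/(-f'(\|x-x_0\|))$. Since $\langle F'(x)v,v\rangle=\langle\widehat{F'(x)}v,v\rangle$, Lemma~\ref{eq:adjunt} then shows $F'(x)$ is strongly monotone. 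Consequently $F'(x)+T$ is maximal monotone (sum of a continuous monotone operator and a maximal monotone one) and coercive, hence surjective with single-valued inverse; therefore $L_F(x,0)^{-1}$ is a singleton and each iterate $x_{k+1}$ in \eqref{eq:DNS} is uniquely determined.

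Third — the heart of the proof — I would prove by induction that $\|x_k-x_0\|\le t_k$ and $\|x_{k+1}-x_k\|\le t_{k+1}-t_k$. The base case is exactly \eqref{eq.ipoint1}, since $t_1=f(0)$. For the step, I would use $-F(x_k)-F'(x_k)(x_{k+1}-x_k)\in T(x_{k+1})$ and $-F(x_{k-1})-F'(x_{k-1})(x_k-x_{k-1})\in T(x_k)$ and apply monotonicity of $T$ to this pair to cancel $T$, obtaining $\langle F'(x_k)(x_{k+1}-x_k),x_{k+1}-x_k\rangle\le\langle F(x_{k-1})+F'(x_{k-1})(x_k-x_{k-1})-F(x_k),x_{k+1}-x_k\rangle$. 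Bounding the left side below by Lemma~\ref{eq:adjunt} and the right side above by Cauchy--Schwarz, a factor $\|x_{k+1}-x_k\|$ cancels and leaves $\|x_{k+1}-x_k\|\le\|\widehat{F'(x_k)}^{-1}\|\,\|F(x_k)-F(x_{k-1})-F'(x_{k-1})(x_k-x_{k-1})\|$. I would control the residual by integrating \eqref{Hyp:MH} along $[x_{k-1},x_k]$ and using the inductive bounds $\|x_k-x_{k-1}\|\le t_k-t_{k-1}$, $\|x_{k-1}-x_0\|\le t_{k-1}$ with the monotonicity of $f'$, which yields $\|\widehat{F'(x_0)}^{-1}\|\,\|F(x_k)-F(x_{k-1})-F'(x_{k-1})(x_k-x_{k-1})\|\le f(t_k)$; combined with the bound on $\|\widehat{F'(x_k)}^{-1}\|$ this gives precisely $\|x_{k+1}-x_k\|\le f(t_k)/(-f'(t_k))=t_{k+1}-t_k$, closing the induction. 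Then $\{x_k\}\subset B(x_0,t_*)$ is Cauchy and converges to some $x_*\in B[x_0,t_*]$ with $\|x_*-x_k\|\le t_*-t_k$, the first estimate in \eqref{eq:q2}.

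Finally I would identify and characterize the limit. Passing to the limit in \eqref{eq:DNS}, using continuity of $F,F'$ and closedness of the graph of the maximal monotone $T$, gives $-F(x_*)\in T(x_*)$, so $x_*$ solves \eqref{eq:ipi}. For uniqueness in $B[x_0,t_*]$ I would take a second solution $y_*$, apply monotonicity of $T$ to $-F(x_*)\in T(x_*)$ and $-F(y_*)\in T(y_*)$, write $F(y_*)-F(x_*)=\int_0^1 F'(x_*+\tau(y_*-x_*))(y_*-x_*)\,d\tau$, and invoke Lemma~\ref{eq:adjunt} with the strict negativity of $f'$ on $[0,t_*)$ to force $y_*=x_*$. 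The sharper estimate in \eqref{eq:q2} comes from repeating the monotonicity/residual argument with $x_*$ in place of $x_{k-1}$, producing $\|x_*-x_{k+1}\|\le\|\widehat{F'(x_k)}^{-1}\|\,\|F(x_*)-F(x_k)-F'(x_k)(x_*-x_k)\|$ and matching the right-hand side against the scalar identity for $t_*-t_{k+1}$; \eqref{eq:rates0} and, under \textbf{h4)}, \eqref{ine.rates1} then follow from the corresponding scalar remainder bounds. I expect the main obstacle to be the well-definedness step: unlike the equation case $F(x)=0$ one cannot simply invert a derivative, so one must combine the Banach-lemma invertibility of $\widehat{F'(x_k)}$ with the surjectivity theory for strongly monotone maximal monotone operators; making the residual comparison land exactly on the scalar quantity $f(t_k)/(-f'(t_k))$ is the other delicate point.
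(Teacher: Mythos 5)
Your proposal is correct, and its core estimate coincides with the paper's: the chain ``apply monotonicity of $T$ to the two Newton inclusions to cancel the set-valued part, bound below by Lemma~\ref{eq:adjunt}, above by Cauchy--Schwarz, insert the Banach-lemma bound $\|\widehat{F'(x)}^{-1}\|\leq -\|\widehat{F'(x_0)}^{-1}\|/f'(t)$, and control the linearization error by the majorant'' is exactly the content of the paper's Lemmas~\ref{pr:taylor} and \ref{le:wdns} and of the computation inside Lemma~\ref{le:cl}, and your identity $\|\widehat{F'(x_0)}^{-1}\|\,\|F(x_k)-F(x_{k-1})-F'(x_{k-1})(x_k-x_{k-1})\|\leq e_f(t_{k-1},t_k)=f(t_k)$ is the same one the paper exploits. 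You differ in two places. First, packaging: you run a direct induction on $\|x_k-x_0\|\leq t_k$ and $\|x_{k+1}-x_k\|\leq t_{k+1}-t_k$, whereas the paper encodes the identical induction in the invariant sets $K(t)$ of \eqref{eq:ker} and the inclusion $N_{F+T}(K(t))\subset K(n_f(t))$; these are interchangeable, the set formalism buying only a cleaner statement of invariance. Second, genuinely different: your uniqueness argument integrates $F'$ along the segment joining two solutions and invokes strong monotonicity, while the paper instead reuses its rate estimate (Lemma~\ref{ine.rates}, with $y=y_*$, $v=t_*$) to prove by induction that $\|y_*-x_k\|\leq t_*-t_k$ and lets $k\to\infty$. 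Your route is more direct, but it needs care at the boundary: when {\bf h4} fails the monotonicity modulus $-f'(s)/\|\widehat{F'(x_0)}^{-1}\|$ vanishes as $s\to t_*$, so for two distinct solutions lying on the sphere $\|z-x_0\|=t_*$ you must note (strict convexity of the Hilbert norm) that the interior of the segment lies strictly inside the ball, where the modulus is positive; the paper's induction sidesteps this case analysis and yields the error bound \eqref{iq.indu} as a by-product. One small repair to your well-definedness step: Lemma~\ref{eq:adjunt} requires $\widehat{F'(x)}$ to be a \emph{positive} operator, not merely invertible, and Banach's lemma alone gives only invertibility; positivity needs the extra perturbation argument $\langle\widehat{F'(x)}y,y\rangle\geq\langle\widehat{F'(x_0)}y,y\rangle-\|y\|^2/\|\widehat{F'(x_0)}^{-1}\|\geq 0$ carried out in the second half of the paper's Lemma~\ref{le:wdns}, which your sketch skips but which is filled in one line with the tools you already have.
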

\begin{remark}
When $F\equiv {0}$ and $f'$ satisfies a Lipschitz-type condition, we will obtain a particular instance of Theorem~\ref{th:nt}, which retrieves the classical convergence theorem on Newton's method under the Lipschitz condition; see \cite{Rall1974, Traub1979}. 
\end{remark}
\begin{remark}\label{def.good}
Since $T$ is monotone maximal, if there exists a constant $c>0$ such that 
\begin{equation}\label{eq.gooddef}
\langle F'(x_k)y,y\rangle \geq c\|y\|^2
\end{equation}
for each $y\in \banachh$, then there exist an unique point $x_{k+1}$ such that the first inclusion in \eqref{eq:DNS} holds. The proof of this result can be found in \cite[Lemma~2.2]{Uko1996}. Hence, if for each $k$, there exist a constant $c>0$ such that \eqref{eq.gooddef} holds, then the sequence generated by \eqref{eq:DNS} is well defined.
\end{remark}
{\it From now on, we  assume that the hypotheses of Theorem \ref{th:nt} hold}.
\subsection{Basic results} \label{sec:PMF}
In this section we will establish some results about the majorant function $f:[0,\; R)\to \mathbb{R}$ and, some relationships between the majorant function and the set-valued mapping $F+T.$ For this, we begin by reminding that Proposition~3 of \cite{FerreiraSvaiter2009} state that the majorant function $f$ has a smallest root $t_*\in  (0,R)$, is strictly convex, $f(t)>0,$ $f'(t)<0$ and $t<t-f(t)/f'(t)< t_*$ for all $t\in [0,t_ *)$. Moreover, $f'(t_*)\leqslant 0$ and $f'(t_*)<0$ if, and only if, there exists $t\in (t_*,R)$ such that $f(t)< 0$. Let
$$
 \bar{t}:=\sup \left\{t\in [0,R): f'(t)<0 \right\}.
$$ 
Note that $t_*\leq \bar{t}$. Since $f'(t)<0$ for all $ t\in [0, \bar{t})$, the Newton iteration of the majorant function $f$ is well defined in $[0, \bar{t}).$ Let us call it $n_{f}: [0, \bar{t}) \to \mathbb{R}$ such that 
\begin{equation}\label{eq.majorfunc}
n_{f}(t)=t-\frac{f(t)}{f'(t)}.
\end{equation}
The next result will be used to obtain the convergence rate of the sequence generated by Newton's method for solving $f(t)=0.$ Its proof can be found in \cite[Proposition 4]{FerreiraSvaiter2009}.
\begin{lemma}\label{eq.ratemajor}
For all $t\in [0,t_*)$ we have $n_{f}(t)\in [0,t_*),$ $t<n_{f}(t)$ and $t_*-n_{f}(t)\leq \frac{1}{2}(t_*-t).$ If $f$ also satisfies the condition {\bf h4} then
$$
t_* -n_{f}(t) \leq \frac{D^{-}f'(t_*)}{-2f'(t_*)} (t_* -t)^2, \qquad \forall ~t\in [0,t_*).
$$
\end{lemma}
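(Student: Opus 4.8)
The plan is to reduce everything to a single integral representation of the Newton error and then estimate it using the convexity structure of $f$. Throughout I rely on the properties already recorded from Proposition~3 of \cite{FerreiraSvaiter2009}: on $[0,t_*)$ one has $f(t)>0$, $f'(t)<0$, and $t<t-f(t)/f'(t)<t_*$. This last chain of inequalities is precisely the assertions $n_{f}(t)\in[0,t_*)$ and $t<n_{f}(t)$, so the first sentence of the lemma requires no further argument. It remains to prove the two quantitative estimates, and for these I will use that $f\in C^2$ with $f''\geq 0$ (since $f'$ is strictly increasing) and with $f''$ non-decreasing (since $f'$ is convex, by \textbf{h2}).

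First I would establish the key identity. Taylor's formula with integral remainder about $t$, together with $f(t_*)=0$, gives
\[
f(t)+f'(t)(t_*-t)=-\int_t^{t_*}(t_*-s)f''(s)\,ds .
\]
Substituting into the definition $n_{f}(t)=t-f(t)/f'(t)$ then yields
\[
t_*-n_{f}(t)=\frac{(t_*-t)f'(t)+f(t)}{f'(t)}=\frac{\int_t^{t_*}(t_*-s)f''(s)\,ds}{-f'(t)},
\]
where numerator and denominator are both nonnegative on $[0,t_*)$, reconfirming $n_{f}(t)\leq t_*$.

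For the linear rate I would bound the denominator from below, using $f'(t_*)\leq 0$, by $-f'(t)=-f'(t_*)+\int_t^{t_*}f''(s)\,ds\geq\int_t^{t_*}f''(s)\,ds$, so that $t_*-n_{f}(t)$ is at most the ratio $\bigl(\int_t^{t_*}(t_*-s)f''(s)\,ds\bigr)\big/\bigl(\int_t^{t_*}f''(s)\,ds\bigr)$. I then prove the weighted inequality $\int_t^{t_*}(t_*-s)f''(s)\,ds\leq\frac{t_*-t}{2}\int_t^{t_*}f''(s)\,ds$. Here the monotonicity of $f''$ is essential: the linear weight $g(s):=(t_*-s)-\frac{t_*-t}{2}$ is decreasing in $s$, antisymmetric about the midpoint $m:=(t+t_*)/2$, and hence integrates to zero over $[t,t_*]$; writing $\int_t^{t_*}g\,f''=\int_t^{t_*}g\,(f''-f''(m))$ and noting that $g(s)(f''(s)-f''(m))\leq 0$ pointwise (both factors flip sign exactly at $m$ because $f''$ is non-decreasing) shows this integral is $\leq 0$, which delivers the factor $\tfrac12$.

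For the quadratic rate under \textbf{h4} I would instead bound the denominator by the positive constant $-f'(t)\geq -f'(t_*)>0$ and bound the integrand by $f''(s)\leq D^{-}f'(t_*)$ for $s\in[t,t_*)$ (valid since $f''$ is non-decreasing and, as $f\in C^2$, tends to the left derivative $D^{-}f'(t_*)=f''(t_*)$ at $t_*$); then $\int_t^{t_*}(t_*-s)f''(s)\,ds\leq D^{-}f'(t_*)\,(t_*-t)^2/2$, and combining gives $t_*-n_{f}(t)\leq \frac{D^{-}f'(t_*)}{-2f'(t_*)}(t_*-t)^2$. The main obstacle is the linear-rate step: extracting the sharp constant $\tfrac12$ rather than a trivial $1$ forces the Chebyshev-type rearrangement above, which is exactly where the convexity of $f'$ (equivalently, the monotonicity of $f''$) is used; the remaining manipulations are routine bookkeeping with the integral identity.
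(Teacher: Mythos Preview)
Your argument is correct. The paper itself does not prove this lemma at all: it simply cites \cite[Proposition~4]{FerreiraSvaiter2009} and moves on. So your self-contained proof is more than the paper provides, and the comparison is necessarily one-sided.

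A few comments on substance. The integral identity
\[
t_*-n_f(t)=\frac{1}{-f'(t)}\int_t^{t_*}(t_*-s)f''(s)\,ds
\]
is the right engine, and both estimates fall out of it cleanly. The quadratic bound under \textbf{h4} is immediate from $-f'(t)\geq -f'(t_*)>0$ and $f''(s)\leq f''(t_*)=D^{-}f'(t_*)$, exactly as you wrote. For the linear rate, your Chebyshev-type rearrangement is correct: the decreasing weight $g(s)=(t_*-s)-\tfrac{t_*-t}{2}$ has zero mean over $[t,t_*]$, and pairing it against the non-decreasing $f''$ yields $\int_t^{t_*}g\,f''\leq 0$, which is precisely the sharp factor~$\tfrac12$. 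One small observation worth recording explicitly (you implicitly use it when dividing by $\int_t^{t_*}f''$): this integral is strictly positive because $f'$ is strictly increasing, so $f'(t_*)-f'(t)>0$. Also, the justification $f''\geq 0$ follows from $f'$ non-decreasing together with the continuity of $f''$, not from strict monotonicity per se; this is a cosmetic point.

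In short: the paper outsources the proof, and your direct argument via the Taylor integral identity plus a rearrangement inequality is a legitimate and tidy route.
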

The definition on $\{t_k\}$ on \eqref{eq:DNS} is equivalent to the following one
\begin{equation}\label{eq.majseq}
t_0=0, \qquad t_{k+1}=n_{f}(t_k), \qquad k=0,1\ldots.
\end{equation}
The next result contain the main convergence properties of the above sequence and its prove, which is a consequence of Lemma~\ref{eq.ratemajor}, follows the same pattern as the proof of Corollary~2.15 of \cite{FerreiraMax2013}.
\begin{corollary}\label{major.convergence}
The sequence $\{t_k\}$ is well defined, strictly increasing and is contained in $[0,t_*).$ Moreover, it satisfies second inequality in \eqref{eq:rates0} and converges $Q$-linearly to $t_*.$ If also satisfies assumption {\bf h4}  then $\{t_k\}$ satisfies the second inequality in \eqref{ine.rates1} and converges $Q$-quadratically.
\end{corollary}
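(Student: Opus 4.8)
The plan is to prove every assertion of the corollary by a single induction on $k$, using Lemma~\ref{eq.ratemajor} as the engine that transfers the relevant properties from $t_k$ to $t_{k+1}=n_f(t_k)$. Since the recursion \eqref{eq.majseq} and the Newton iteration $n_f$ of the majorant function are exactly the objects controlled by that lemma, the work reduces to bookkeeping.

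First I would establish well-definedness together with the containment in $[0,t_*)$ by induction. The base case is immediate: $t_0=0$, and since $t_*\in(0,R)$ we have $t_0\in[0,t_*)$. For the inductive step, I would suppose $t_k\in[0,t_*)$. Because $t_*\leq\bar{t}$, the point $t_k$ lies in the domain $[0,\bar{t})$ of $n_f$, so $t_{k+1}=n_f(t_k)$ is well defined; and Lemma~\ref{eq.ratemajor} gives directly $n_f(t_k)\in[0,t_*)$, i.e.\ $t_{k+1}\in[0,t_*)$. This closes the induction and shows simultaneously that the whole sequence is well defined and contained in $[0,t_*)$.

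Next, the same lemma furnishes the remaining order and rate properties for each $k$ with no further induction. From $t_k<n_f(t_k)=t_{k+1}$ (the middle clause of Lemma~\ref{eq.ratemajor}) the sequence is strictly increasing. From $t_*-t_{k+1}=t_*-n_f(t_k)\leq\tfrac12(t_*-t_k)$ I obtain the second inequality in \eqref{eq:rates0}, and since $t_k<t_*$ the errors $t_*-t_k$ are positive, so this is precisely $Q$-linear convergence with rate $\tfrac12$; iterating gives $t_*-t_k\leq 2^{-k}t_*\to 0$, whence $t_k\to t_*$.

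Finally, under the additional hypothesis {\bf h4}, the second part of Lemma~\ref{eq.ratemajor} applies at each $t_k\in[0,t_*)$ and yields
$$
t_*-t_{k+1}=t_*-n_f(t_k)\leq\frac{D^{-}f'(t_*)}{-2f'(t_*)}(t_*-t_k)^2,
$$
which is the second inequality in \eqref{ine.rates1} and establishes $Q$-quadratic convergence. Because every step is a direct invocation of Lemma~\ref{eq.ratemajor}, there is no genuine obstacle here; the only point requiring care is checking that $t_k$ stays inside the domain of $n_f$, and this is exactly what the containment $[0,t_*)\subseteq[0,\bar{t})$ (guaranteed by $t_*\leq\bar{t}$) provides.
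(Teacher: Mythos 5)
Your proof is correct and follows essentially the same route as the paper, which likewise derives the corollary directly from Lemma~\ref{eq.ratemajor} (citing the pattern of Corollary~2.15 of \cite{FerreiraMax2013}): induction for well-definedness and containment in $[0,t_*)$, then the three clauses of the lemma for monotonicity, the $Q$-linear bound with the geometric estimate $t_*-t_k\leq 2^{-k}t_*$, and the second part of the lemma under {\bf h4} for the quadratic rate. Your explicit check that $t_k$ stays in the domain of $n_f$ via $t_*\leq\bar{t}$ is exactly the bookkeeping the paper leaves implicit.
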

Therefore, we have obtained all the statements about the majorant sequence $\{t_k\}$ on Theorem~\ref{th:nt}.  Now we are going  to  establish  some relationships between the majorant function and the set-valued mapping $F+T.$ In the sequel we will  prove that the partial linearization of $F+T$ has a single-valued inverse, which is Lipschitz  in a  neighborhood of $x_0$. Since  Newton's iteration at a point in this neighborhood  happens to be a zero of the partial linearization of $F+T$ at such a point, it will be first  convenient to study the  {\it linearization error   of  $F$} at a point
in $\Omega$
\begin{equation}\label{eq:def.er}
  E_F(x,y):= F(y)-\left[ F(x)+F'(x)(y-x)\right],\qquad y,\, x\in \Omega.
\end{equation}
In the next result we  bound this error by the linearization error  of the majorant function $f$, namely, 
$$
 e_{f}(t,u):=f(u)-\left[f(t)+f'(t)(u-t)\right],\qquad t,\,u \in [0,R).
$$
\begin{lemma}  \label{pr:taylor}
Take $x,y\in B(x_0,R)$ and $0\leq t<v< R$. If $\|x-x_0\|\leq t$ and $\|y-x\|\leq v-t$ then
  \begin{equation}\label{eq:errormajor}
	\|\widehat{F'(x_0)}^{-1}\| \|E_F(x,y)\| \leq e_{f}(t,v)\frac{\|y-x\|^2}{(v-t)^2}.
	\end{equation}
\end{lemma}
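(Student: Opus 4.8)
The plan is to combine the integral (Taylor) representation of the linearization error with the majorant inequality \eqref{Hyp:MH}, and then to reduce the estimate to a monotonicity property of the normalized error of $f$. First I would write, using that $F'$ is continuous on the segment joining $x$ and $y$ (which lies in $\Omega$),
\begin{equation*}
E_F(x,y)=\int_0^1\left[F'\big(x+\tau(y-x)\big)-F'(x)\right](y-x)\,d\tau,
\end{equation*}
which follows from the fundamental theorem of calculus applied to $\tau\mapsto F(x+\tau(y-x))$ together with the definition \eqref{eq:def.er}. Taking norms and multiplying through by $\|\widehat{F'(x_0)}^{-1}\|$ reduces the whole problem to bounding the integrand.

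For the integrand I set $\alpha:=\|x-x_0\|$ and $\beta:=\|y-x\|$ and apply the majorant condition \eqref{Hyp:MH} to the pair $x$ and $x+\tau(y-x)$; note that both points lie in $B(x_0,v)$, since $\|x-x_0\|\le t$ and $\|y-x\|\le v-t$ force $\alpha+\beta\le v<R$. Using $\|(x+\tau(y-x))-x\|=\tau\beta$, this gives
\begin{equation*}
\|\widehat{F'(x_0)}^{-1}\|\,\big\|F'\big(x+\tau(y-x)\big)-F'(x)\big\|\le f'(\alpha+\tau\beta)-f'(\alpha).
\end{equation*}
Integrating over $\tau\in[0,1]$ and recognizing the resulting expression as a linearization error of $f$ yields
\begin{equation*}
\|\widehat{F'(x_0)}^{-1}\|\,\|E_F(x,y)\|\le \beta\int_0^1\big[f'(\alpha+\tau\beta)-f'(\alpha)\big]\,d\tau=f(\alpha+\beta)-f(\alpha)-\beta f'(\alpha)=e_f(\alpha,\alpha+\beta).
\end{equation*}

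It then remains to prove the purely scalar inequality $e_f(\alpha,\alpha+\beta)\le e_f(t,v)\,\beta^2/(v-t)^2$ under the assumptions $\alpha\le t$ and $\beta\le v-t$, and this is the step I expect to be the crux. The clean way to handle it is to use that $f$ is twice continuously differentiable and to rewrite the normalized error, via the substitution $s=a+\tau(b-a)$ in $e_f(a,b)=\int_a^b(b-s)f''(s)\,ds$, as
\begin{equation*}
\frac{e_f(a,b)}{(b-a)^2}=\int_0^1(1-\tau)\,f''\big(a+\tau(b-a)\big)\,d\tau.
\end{equation*}
Since $f'$ is convex by \textbf{h2}, $f''$ is nondecreasing; and because $\alpha\le t$ and $\tau\beta\le\tau(v-t)$ for every $\tau\in[0,1]$, we have $\alpha+\tau\beta\le t+\tau(v-t)$, whence $f''(\alpha+\tau\beta)\le f''(t+\tau(v-t))$ pointwise in $\tau$. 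Comparing the two integral representations gives $e_f(\alpha,\alpha+\beta)/\beta^2\le e_f(t,v)/(v-t)^2$, and multiplying by $\beta^2=\|y-x\|^2$ and combining with the previous display yields \eqref{eq:errormajor}. The main obstacle is precisely this monotonicity of the normalized error; the integral representation together with $f''$ nondecreasing is what makes it transparent, and it is the only place where the convexity hypothesis \textbf{h2} is genuinely used.
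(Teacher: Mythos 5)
Your proposal is correct, and its skeleton coincides with the paper's proof: both start from the integral representation $E_F(x,y)=\int_0^1[F'(x+\tau(y-x))-F'(x)](y-x)\,d\tau$ and feed it into the majorant condition \eqref{Hyp:MH} applied to the pair $x$, $x+\tau(y-x)$. Where you diverge is the final scalar comparison. The paper never integrates down to $e_f(\alpha,\alpha+\beta)$: it bounds the integrand \emph{before} integrating, using two consequences of the convexity of $f'$ — first $f'(\alpha+\tau\beta)-f'(\alpha)\le f'(t+\tau\beta)-f'(t)$ (monotonicity of equal-length increments, from $\alpha\le t$), then $f'(t+\tau\beta)-f'(t)\le[f'(t+\tau(v-t))-f'(t)]\,\beta/(v-t)$ (monotonicity of difference quotients, from $\tau\beta\le\tau(v-t)$) — and only then integrates, recognizing $\int_0^1[f'(t+\tau(v-t))-f'(t)]\,d\tau=e_f(t,v)/(v-t)$. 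You instead integrate first and prove the normalized-error monotonicity $e_f(\alpha,\alpha+\beta)/\beta^2\le e_f(t,v)/(v-t)^2$ via the second-order representation $e_f(a,b)/(b-a)^2=\int_0^1(1-\tau)f''(a+\tau(b-a))\,d\tau$ and the fact that $f''$ is nondecreasing. Both arguments are valid here, since the theorem does assume $f$ twice continuously differentiable, so your use of $f''$ is legitimate; note, however, that the paper's route uses only the convexity of $f'$ and would survive verbatim if $f$ were merely $C^1$ with convex derivative, whereas yours genuinely needs the $C^2$ hypothesis. Your version buys transparency — the pointwise comparison $f''(\alpha+\tau\beta)\le f''(t+\tau(v-t))$ makes the monotonicity of the normalized error immediate — at the cost of slightly less generality. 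One cosmetic point: like the paper, you should really invoke \eqref{Hyp:MH} on $B(x_0,\kappa)$ rather than $B(x_0,R)$, but your observation that all the points $x+\tau(y-x)$ stay in $B(x_0,v)$ with $v<R$ matches the paper's own level of care on this.
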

\begin{proof}
Since $x+\tau(y-x)\in B(x_0,R),$ for all $\tau\in [0,1]$ and $F$ is continuously differentiable in $\Omega$, the linearization error of $F$ on \eqref{eq:def.er} is equivalent to 
$$
 E_F(x,y)=\int_{0}^{1} [F'(x+\tau(y-x))-F'(x)](y-x) d\tau, 
$$
which combined with the assumption in \eqref{Hyp:MH} and after some  simple algebraic manipulations we obtain 
$$
\|\widehat{F'(x_0)}^{-1}\| \|E_F(x,y)\| \leq \int_{0}^{1} [f'(\|x-x_0\| +\tau \|y-x\|) -f'(\|x-x_0\|)]\|y-x\| d\tau.
$$
Now, using the convexity of $f',$ the assumptions $\|x-x_0\|\leq t$ and $\|y-x\|<v-t,$ $v<R$  we have
\begin{eqnarray*}
   f'(\|x-x_0\| +\tau \|y-x\|) -f'(\|x-x_0\|)  &\leq& f'(t +\tau \|y-x\|) -f'(t) \\
	                                                    &\leq& [f'(t +\tau \|v-t\|) -f'(t)]\frac{\|y-x\|}{v-t}, 
\end{eqnarray*}
for any $\tau \in [0,1]$. Combining these inequalities we conclude that
	$$
	\|\widehat{F'(x_0)}^{-1}\| \|E_F(x,y)\| \leq \int_{0}^{1} [f'(t +\tau \|v-t\|) -f'(t)]\frac{\|y-x\|^2}{v-t}d\tau, 
	$$
which, after performing the integration we obtain  the desired result.
\end{proof}
In the next result  we will present the main relationships between the majorant function $f$ and the operator $F$. The  result is a consequence of Banach's lemma and its statement  is:
\begin{lemma} \label{le:wdns}
Let $x_0 \in \Omega$ be such that $\widehat{F'(x_0)}$ is a positive operator and $\widehat{F'(x_0)}^{-1}$ exists. If $\|x-x_0\|\leq t <t^*$, then $\widehat{F'(x)}$ is a positive operator and $\widehat{F'(x)}^{-1}$ exists. Moreover, 
$$
\|\widehat{F'(x)}^{-1}\|\leq  -\frac{\|\widehat{F'(x_0)}^{-1}\|}{f'(t)}.
$$
\end{lemma}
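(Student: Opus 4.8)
The plan is to realise $\widehat{F'(x)}$ as a small perturbation of the invertible positive operator $\widehat{F'(x_0)}$ and then invoke Banach's lemma (Lemma~\ref{eq.banachlemma}). Observe first that $\widehat{F'(x)}=\tfrac12(F'(x)+F'(x)^*)$ is self-conjugate for every $x$, so the real work is to establish invertibility, the norm bound, and coercivity. The common ingredient is a bound on $\|F'(x)-F'(x_0)\|$: applying the majorant condition \eqref{Hyp:MH} to the pair $(x_0,x)$ and using $\|x-x_0\|\le t<t^*$ together with {\bf h1} ($f'(0)=-1$) and the monotonicity of $f'$ from {\bf h2}, one obtains
$$
\|\widehat{F'(x_0)}^{-1}\|\,\|F'(x)-F'(x_0)\|\le f'(\|x-x_0\|)-f'(0)=f'(\|x-x_0\|)+1\le f'(t)+1.
$$

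Set $B:=\widehat{F'(x_0)}^{-1}\widehat{F'(x)}$. Using that $A\mapsto\widehat A$ is linear and $\|\widehat A\|\le\|A\|$, the previous estimate gives $\|I-B\|=\|\widehat{F'(x_0)}^{-1}(\widehat{F'(x_0)}-\widehat{F'(x)})\|\le\|\widehat{F'(x_0)}^{-1}\|\,\|F'(x_0)-F'(x)\|\le f'(t)+1$. The crucial point, and the place where the hypotheses really enter, is that this quantity lies in $[0,1)$: since $t<t^*\le\bar t$ and $f'<0$ on $[0,\bar t)$ we have $f'(t)<0$, while $f'(t)\ge f'(0)=-1$ (again by {\bf h1}, {\bf h2}) gives $f'(t)+1\ge0$. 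Banach's lemma then shows that $B$ is invertible with $\|B^{-1}\|\le 1/(1-(f'(t)+1))=1/(-f'(t))$. Since $\widehat{F'(x)}=\widehat{F'(x_0)}B$, the inverse $\widehat{F'(x)}^{-1}=B^{-1}\widehat{F'(x_0)}^{-1}$ exists and $\|\widehat{F'(x)}^{-1}\|\le\|B^{-1}\|\,\|\widehat{F'(x_0)}^{-1}\|\le-\|\widehat{F'(x_0)}^{-1}\|/f'(t)$, which is exactly the asserted bound.

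For positivity of $\widehat{F'(x)}$ I would argue directly rather than through Banach's lemma. By Lemma~\ref{eq:adjunt}, $\langle\widehat{F'(x_0)}y,y\rangle\ge\|y\|^2/\|\widehat{F'(x_0)}^{-1}\|$, and since $\langle\widehat A y,y\rangle\ge-\|A\|\,\|y\|^2$ for any bounded $A$, the same bound on $\|F'(x)-F'(x_0)\|$ yields
$$
\langle\widehat{F'(x)}y,y\rangle\ge\frac{\|y\|^2}{\|\widehat{F'(x_0)}^{-1}\|}-\|F'(x)-F'(x_0)\|\,\|y\|^2\ge\frac{-f'(t)}{\|\widehat{F'(x_0)}^{-1}\|}\,\|y\|^2\ge0,
$$
the last step using $f'(t)<0$; hence $\widehat{F'(x)}$ is positive. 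The only genuinely delicate points are the bookkeeping that pins $f'(t)+1$ into $[0,1)$ — which is precisely where {\bf h1}, {\bf h2} and the containment $t<t^*\le\bar t$ are simultaneously needed — and the elementary passage between the full operator norm controlled by \eqref{Hyp:MH} and the symmetric part $\widehat{\,\cdot\,}$ appearing in the statement, carried out via $\|\widehat A\|\le\|A\|$ and $\langle\widehat Ay,y\rangle\ge-\|A\|\,\|y\|^2$. Everything else reduces to Lemma~\ref{eq.banachlemma} and Lemma~\ref{eq:adjunt}.
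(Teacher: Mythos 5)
Your proof is correct and takes essentially the same route as the paper's: the majorant condition \eqref{Hyp:MH} with {\bf h1}--{\bf h2} gives $\|\widehat{F'(x_0)}^{-1}\|\,\|F'(x)-F'(x_0)\|\le f'(t)+1<1$, Banach's lemma (Lemma~\ref{eq.banachlemma}) yields invertibility and the norm estimate, and Lemma~\ref{eq:adjunt} combined with the same perturbation bound gives positivity. The only (harmless) differences are that you make the Banach-lemma step explicit through the factorization $B=\widehat{F'(x_0)}^{-1}\widehat{F'(x)}$, where the paper leaves it implicit, and your positivity computation keeps the factor $f'(t)+1$ instead of relaxing it to $1$, so you obtain the slightly stronger coercivity estimate $\langle\widehat{F'(x)}y,y\rangle\ge -f'(t)\|y\|^2/\|\widehat{F'(x_0)}^{-1}\|$ where the paper concludes only nonnegativity.
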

\begin{proof}
Firstly note that 
\begin{equation}\label{eq.matriz}
\|\widehat{F'(x)}-\widehat{F'(x_0)}\|\leq \frac{1}{2}\|F'(x)-F'(x_0)\| + \frac{1}{2}\|(F'(x)-F'(x_0))^*\|=\|F'(x)-F'(x_0)\|.
\end{equation}
Take $x\in B[x_0, t],$ $0\leq t<t^*$. Thus $f'(t)<0$. Using \eqref{eq.matriz}, \eqref{Hyp:MH} and taking into account {\bf h1} and {\bf h2} we obtain that
  \begin{equation}\label{eq:majcond}
    \|\widehat{F'(x_0)}^{-1}\| \|\widehat{F'(x)}-\widehat{F'(x_0)}\|\leq \|\widehat{F'(x_0)}^{-1}\| \|F'(x)-F'(x_0)\| \leq f'(\|x-x_0\|)-f'(0)<f'(t)+1< 1. 
  \end{equation}
	Thus, by Banach's lemma, we conclude that $\widehat{F'(x)}^{-1}$ exists. Moreover by above inequality,
	$$
	\|\widehat{F'(x)}^{-1}\|\leq \frac{\|\widehat{F'(x_0)}^{-1}\|}{1-\|\widehat{F'(x_0)}^{-1}\|\|F'(x)-F'(x_0)\|}\leq \frac{\|\widehat{F'(x_0)}^{-1}\|}{1-(f'(t)+1)} =-\frac{\|\widehat{F'(x_0)}^{-1}\|}{f'(t)}.
	$$
On the other hand, using \eqref{eq:majcond} we have
\begin{equation}\label{eq.selfadj}
\|\widehat{F'(x)}-\widehat{F'(x_0)}\|\leq \frac{1}{\|\widehat{F'(x_0)}^{-1}\|}.
\end{equation}
Take $y\in \banachh$. Then, it follows by above inequality that
$$
\langle (\widehat{F'(x_0)} -\widehat{F'(x)})y,y\rangle \leq \|\widehat{F'(x_0)} -\widehat{F'(x)}\|\|y\|^2\leq \frac{\|y\|^2}{\|\widehat{F'(x_0)}^{-1}\|},
$$
which implies, after of simple manipulations that
$$
\langle \widehat{F'(x_0)}y,y\rangle -\frac{\|y\|^2}{\|\widehat{F'(x_0)}^{-1}\|} \leq \langle \widehat{F'(x)}y,y\rangle.
$$
Since $\widehat{F'(x_0)}$ is a positive operator and $\widehat{F'(x_0)}^{-1}$ exists by assumption, we obtain by Lemma~\ref{eq:adjunt} that
$$
\langle \widehat{F'(x_0)}y,y\rangle \geq \frac{\|y\|^2}{\|\widehat{F'(x_0)}^{-1}\|}.
$$
Therefore, combining the two last inequalities we conclude that $\langle \widehat{F'(x)}y,y\rangle \geq 0$, i.e, $\widehat{F'(x)}$ is a positive operator.
\end{proof} 
Lemma~\ref{le:wdns} shows that $\widehat{F'(x)}$ is a positive operator and $\widehat{F'(x)}^{-1}$ exists, thus by Lemma~\ref{eq:adjunt} we have that for any $y\in \banachh$
$$
\langle \widehat{F'(x)}y,y\rangle \geq \frac{\|y\|^2}{\|\widehat{F'(x)}^{-1}\|}.
$$
Note that $\langle \widehat{F'(x)}y,y\rangle=\langle F'(x)y,y\rangle$, thus by the second part of Lemma~\ref{le:wdns} and $h_2$ we conclude that $F'(x)$ satisfies \eqref{eq.gooddef} and consequently, the Newton iteration mapping  is well-defined.  Let us call $N_{F+T}$, the Newton iteration mapping  for $F+T$ in that region, namely, $N_{F+T}:B(x_0, t^*) \to \banachh$ is defined by 
\begin{equation} \label{eq:NFef1}
N_{F+T}(x):= L_F(x,0)^{-1}.
\end{equation}
Using \eqref{eq:invplm}  we conclude that the definition of the Newton iteration mapping in \eqref{eq:NFef1} is equivalent to 
\begin{equation} \label{eq:NFef}
0\in L_F(x,N_{F+T}(x)):= F(x)+F'(x)(N_{F+T}(x)-x)+T(N_{F+T}(x)),\qquad \forall ~x\in  B(x_0, t^*).
\end{equation}
\begin{remark}\label{eq.initialpoint}
Since $0\in F(x_0)+F'(x_0)(x_1-x_0) +T(x_1)$, $x_1\in \Omega$, is single-valued, it follow from \eqref{eq:invplm} that
$$
\{x_1\}=L_F (x_0,0)^{-1}.
$$
\end{remark}

Therefore, one can apply a \emph{single} Newton iteration on any $x\in B(x_0, t^*)$ to obtain $N_{F+T}(x)$ which may not belong
to $B(x_0, t^*)$, or even may not belong to the domain of $F$. Thus, this is enough to guarantee the  well-definiteness of only one iteration of Newton's method. To ensure that Newtonian iterations may be repeated indefinitely or in particular, invariant on subsets of    $B(x_0, t_*)$,  we need some additional results. First, define some subsets of $B(x_0,t_*)$ in which, as we shall prove, Newton iteration mapping  \eqref{eq:NFef} are ``well behaved''. Define
\begin{equation}\label{eq:ker} 
{K}(t):=\left\{x\in \Omega ~: ~ \|x-x_0\| \leq t, \quad  \|L_F(x,0)^{-1}-x\| \leq -\frac{f(t)}{f'(t)}\right\}, \qquad t\in [0,t_*), 
\end{equation} 
\begin{equation} \label{eq:kt}  
{K}:=\bigcup_{t\in {[0,t_ *)}}
  K(t).
\end{equation}
\begin{lemma} \label{le:cl}
For each $0\leq t< t_*$ we have ${K}(t) \subset B(x_0,t_*)$ and 
	$
	N_{F+T}({K}(t)) \subset {K}(n_{f}(t)).
	$
	As a consequence, ${K}\subseteq B(x_0,t_ *)$ and $N_{F+T}({K}) \subset {K}$.
\end{lemma}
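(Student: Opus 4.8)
The plan is to prove the two displayed inclusions and then read off the two ``consequence'' statements. The inclusion $K(t)\subset B(x_0,t_*)$ is immediate: if $x\in K(t)$ then $\|x-x_0\|\leq t<t_*$ by definition, so $x\in B(x_0,t_*)$, and taking the union over $t\in[0,t_*)$ gives $K\subseteq B(x_0,t_*)$. For the last assertion, fix $x\in K$, say $x\in K(t)$ with $t\in[0,t_*)$; once the second inclusion is in hand we obtain $N_{F+T}(x)\in K(n_f(t))$, and since $n_f(t)\in[0,t_*)$ by Lemma~\ref{eq.ratemajor} we have $K(n_f(t))\subseteq K$, whence $N_{F+T}(K)\subset K$. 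Thus everything reduces to the central claim $N_{F+T}(K(t))\subset K(n_f(t))$.

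Fix $t\in[0,t_*)$ and $x\in K(t)$, and abbreviate $x^+:=N_{F+T}(x)=L_F(x,0)^{-1}$ and $x^{++}:=N_{F+T}(x^+)$. To conclude $x^+\in K(n_f(t))$ I must check that $x^+\in\Omega$, that $\|x^+-x_0\|\leq n_f(t)$, and that $\|L_F(x^+,0)^{-1}-x^+\|=\|x^{++}-x^+\|\leq -f(n_f(t))/f'(n_f(t))$. The first two are routine: from $x\in K(t)$ we have $\|x^+-x\|\leq -f(t)/f'(t)$, so the triangle inequality gives $\|x^+-x_0\|\leq -f(t)/f'(t)+t=n_f(t)<t_*$, which yields the distance bound and, since $B(x_0,t_*)\subseteq\Omega$, also $x^+\in\Omega$ (so that $x^+$ lies in the region where $N_{F+T}$ was shown to be single-valued and $x^{++}$ is well defined).

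The crux is the error bound on $\|x^{++}-x^+\|$. Put $d:=x^{++}-x^+$. The inclusions defining $x^+$ and $x^{++}$ read $-[F(x)+F'(x)(x^+-x)]\in T(x^+)$ and $-[F(x^+)+F'(x^+)d]\in T(x^{++})$, so monotonicity of $T$ (Definition~\ref{def.mono}) applied to this pair yields
\[
\langle F'(x^+)d,d\rangle\leq \langle F(x)+F'(x)(x^+-x)-F(x^+),d\rangle=\langle -E_F(x,x^+),d\rangle\leq \|E_F(x,x^+)\|\,\|d\|.
\]
Using $\langle F'(x^+)d,d\rangle=\langle\widehat{F'(x^+)}d,d\rangle$ together with Lemma~\ref{le:wdns} (which makes $\widehat{F'(x^+)}$ a positive operator whose inverse exists) and Lemma~\ref{eq:adjunt}, the left-hand side is at least $\|d\|^2/\|\widehat{F'(x^+)}^{-1}\|$; cancelling one factor $\|d\|$ (the case $d=0$ being trivial) gives
\[
\|d\|\leq \|\widehat{F'(x^+)}^{-1}\|\,\|E_F(x,x^+)\|.
\]

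It remains to estimate the two factors and combine them. Lemma~\ref{le:wdns} applied at $x^+$, with $\|x^+-x_0\|\leq n_f(t)<t_*$, bounds $\|\widehat{F'(x^+)}^{-1}\|\leq -\|\widehat{F'(x_0)}^{-1}\|/f'(n_f(t))$. For the linearization error I invoke Lemma~\ref{pr:taylor} with $\|x-x_0\|\leq t$ and $\|x^+-x\|\leq n_f(t)-t$, i.e. with $v=n_f(t)$, which is admissible since $t<n_f(t)<t_*<R$; this gives $\|\widehat{F'(x_0)}^{-1}\|\,\|E_F(x,x^+)\|\leq e_f(t,n_f(t))\,\|x^+-x\|^2/(n_f(t)-t)^2$. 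The algebraic key is the identity $e_f(t,n_f(t))=f(n_f(t))$, which holds because $f(t)+f'(t)(n_f(t)-t)=0$ by the very definition \eqref{eq.majorfunc} of $n_f$; combined with $\|x^+-x\|\leq n_f(t)-t$ this yields $\|\widehat{F'(x_0)}^{-1}\|\,\|E_F(x,x^+)\|\leq f(n_f(t))$. Multiplying the two estimates produces
\[
\|x^{++}-x^+\|\leq \left(-\frac{\|\widehat{F'(x_0)}^{-1}\|}{f'(n_f(t))}\right)\frac{f(n_f(t))}{\|\widehat{F'(x_0)}^{-1}\|}=-\frac{f(n_f(t))}{f'(n_f(t))},
\]
which is precisely the third membership condition, so $x^+\in K(n_f(t))$. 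I expect the monotonicity-plus-positivity step to be the main obstacle: it is the point at which the set-valued operator $T$ is eliminated in favour of the scalar quantities governed by $f$, and everything afterwards is bookkeeping resting on Lemmas~\ref{pr:taylor} and~\ref{le:wdns} and the identity $e_f(t,n_f(t))=f(n_f(t))$.
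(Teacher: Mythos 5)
Your proof is correct and takes essentially the same route as the paper's: the triangle-inequality bound $\|x^{+}-x_0\|\leq t-f(t)/f'(t)=n_f(t)$, elimination of $T$ by writing the two Newton inclusions and using monotonicity together with Lemma~\ref{eq:adjunt} and Lemma~\ref{le:wdns} to get $\|x^{++}-x^{+}\|\leq\|\widehat{F'(x^{+})}^{-1}\|\,\|E_F(x,x^{+})\|$, and then Lemma~\ref{pr:taylor} with $v=n_f(t)$ plus the identity $e_f(t,n_f(t))=f(n_f(t))$. The only differences are cosmetic (your explicit handling of the $d=0$ case, the remark that $x^{+}\in\Omega$, and citing plain monotonicity where the paper invokes maximal monotonicity, though only monotonicity is used).
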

\begin{proof}
The first inclusion follows trivially from the definition of ${K}(t).$ Take   $x\in {K}(t).$ Using definition \eqref{eq:ker} and \eqref{eq.majorfunc} we concluded that
\begin{equation}\label{ini.cond}
\|x-x_0\| \leq t, \quad \qquad \|L_F(x,0)^{-1}-x\| \leq -\frac{f(t)}{f'(t)}, \qquad \quad  t< n_{f}(t) <t_ *.
\end{equation}
Definition of  Newton iteration mapping in \eqref{eq:NFef} implies that  for all $x  \in {K}(t)$ we have
$$
\|N_{F+T}(x)-x_0\| \leq \|x-x_0\| + \|N_{F+T}(x)-x\|= \|x-x_0\| + \|L_F(x,0)^{-1}-x\|, 
$$
and consequently,  using  \eqref{eq.majorfunc} and \eqref{ini.cond}, the last inequality imply 
\begin{equation} \label{eq:fcmt}
\|N_{F+T}(x)-x_0\| \leq t -\frac{f(t)}{f'(t)} = n_{f}(t) <t_ *.
\end{equation}
For simplify the notations define $x_+= N_{F+T} (x)$ and $y=L_F(x_+,0)^{-1}$. Thus, from \eqref{eq:NFef} we have
$$
0\in F(x)+F'(x)(x_+ -x)+T(x_+),\qquad 0\in F(x_+)+F'(x_+)(y -x_+)+T(y).
$$
As $T$ is a maximal monotone, it follows that
$$
\langle F(x)+F'(x)(x_+ -x) -F(x_+)-F'(x_+)(y -x_+), y-x_+ \rangle \geq 0
$$
which implies that
\begin{equation}\label{eq.monmax}
\langle F(x)-F(x_+)+F'(x)(x_+-x), y-x_+ \rangle \geq \langle F'(x_+)(y-x_+), y-x_+ \rangle.
\end{equation}
Since, by Lemma~\ref{le:wdns}, $\widehat{F'(x)}$ is a positive operator and $\widehat{F'(x)}^{-1}$ exists, we obtain from Lemma~\ref{eq:adjunt} that 
 \begin{equation}\label{eq.monmax1000}
\frac{\|y-x_+\|^2}{\|\widehat{F'(x_+)}^{-1}\|}\leq  \langle \widehat{F'(x_+)}(y-x_+),y-x_+\rangle.
\end{equation}
Note that
$
\langle \widehat{F'(x_+)}(y-x_+),y-x_+\rangle = \langle F'(x_+)(y-x_+), y-x_+\rangle,
$
this together \eqref{eq.monmax1000} and \eqref{eq.monmax} yields that
\begin{equation*}\label{eq.monmax2}
\|y-x_+\|^2 \leq \|\widehat{F'(x_+)}^{-1}\| \langle F'(x_+)(y-x_+),y-x_+\rangle \leq \|\widehat{F'(x_+)}^{-1}\| \langle F(x)-F(x_+)+F'(x)(x_+-x), y-x_+\rangle.
\end{equation*}
Hence, after simple manipulations, above inequality becomes
\begin{equation}\label{eq.monmax222}
\|y-x_+\| \leq \|\widehat{F'(x_+)}^{-1}\| \|F(x)-F(x_+)+F'(x)(x_+-x)\|.
\end{equation}
Due to  $x_+= N_{F+T} (x)$ we have from \eqref{eq:fcmt}  that $\|x_+-x_0\| \leq  n_{f}(t)$. Then, taking into account that  $f'$ is increasing and negative, it follows from \eqref{eq.monmax222} , using second part in Lemma~\ref{le:wdns}, \eqref{eq:def.er}  and Lemma~\ref{pr:taylor} we obtain that
\begin{equation}\label{eq.monmax3}
\|y-x_+\| \leq  -\frac{\|\widehat{F'(x_0)}^{-1}\|}{f'(n_{f}(t))}\|E_F(x,x_+)\| \leq \frac{-1}{f'(n_{f}(t))}e_{f}(t, n_{f}(t))\frac{\|x_+-x\|^2}{(n_{f}(t)-t)^2}.
\end{equation}
On the other hand, using   the definition  \eqref{eq.majorfunc}, after  some manipulations we conclude that
$$
f(n_{f}(t))= f(n_{f}(t)) -[f(t) + f'(t)(n_{f}(t)-t)]= e_{f}(t,n_{f}(t)), 
$$
 and because  $x_+= N_{F+T} (x)$,   \eqref{eq.majorfunc} and the  second inequality in \eqref{ini.cond}  imply  $\|x-x_+\|\leq n_{f}(t)-t$, thus inequality in \eqref{eq.monmax3} becomes
$$
\|y-x_+\|  \leq -\frac{f(n_{f}(t))}{f'(n_{f}(t))}.
$$
Therefore, since \eqref{eq:fcmt}  implies $\|x_+-x_0\| \leq n_{f}(t)$  we conclude that the second inclusion of the proposition  is proved.

The third  inclusion ${K}\subseteq B(x_0,t_*)$ follows trivially from \eqref{eq:ker} and \eqref{eq:kt}. To prove the last inclusion $N_{F+T}({K}) \subset {K}$, take $x\in {K}$. Thus $x\in K(t)$ for some $t\in [0,t_ *)$. From the second inclusion  of the proposition, we have $N_{F+T}(x) \in {K}(n_{f}(t))$. Since $n_{f}(t)\in [0,t_ *)$ and using the definition of ${K}$ in \eqref{eq:kt} we concluded the proof.
\end{proof}

\subsection{Convergence analysis}\label{convanalysis}
To prove the convergence result, which is a consequence of the above results, firstly we note that the definition \eqref{eq:NFef1} implies that the sequence $\{x_k\}$ defined in \eqref{eq:DNS},  can be  formally stated by 
\begin{equation}\label{eq.seq}
x_{k+1}=N_{F+T}(x_k), \qquad k=0,1,\ldots,
\end{equation}
or equivalently, 
$$
 0\in F(x_k)+F'(x_k)(x_{k+1}-x_k)+T(x_{k+1}),  \qquad k=0,1, \ldots.
$$
First we will  show that the sequence generated by Newton method is well behaved with respect to the set defined in \eqref{eq:ker}.
\begin{corollary}\label{res.solution}
The sequence $\{x_k\}$ is well defined, is contained in $B(x_0,t_ *),$ converges to a point $x_*\in B[x_0,t_ *]$  satisfying  $0\in F(x_*)+T(x_*).$ Moreover,   $x_k\in {K}(t_k)$, for $k=0,1\ldots$ and 
$$
\|x_*-x_k\|\leq t_ *-t_k, \qquad k=0,1\ldots.
$$
\end{corollary}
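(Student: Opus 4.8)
The plan is to establish by induction that $x_k \in K(t_k)$ for every $k$, and then to pass to the limit in the Newton inclusion. First I would verify the base case $x_0 \in K(t_0)=K(0)$. By Remark~\ref{eq.initialpoint} we have $\{x_1\}=L_F(x_0,0)^{-1}$, so $\|L_F(x_0,0)^{-1}-x_0\|=\|x_1-x_0\|\leq f(0)$ by \eqref{eq.ipoint1}; since $f'(0)=-1$ from {\bf h1}, this reads $\|L_F(x_0,0)^{-1}-x_0\|\leq -f(0)/f'(0)$, which together with $\|x_0-x_0\|=0=t_0$ is exactly the membership condition in \eqref{eq:ker} for $x_0\in K(0)$. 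For the inductive step, if $x_k\in K(t_k)$ then Lemma~\ref{le:cl} gives $N_{F+T}(K(t_k))\subset K(n_{f}(t_k))$; recalling $x_{k+1}=N_{F+T}(x_k)$ from \eqref{eq.seq} and $t_{k+1}=n_{f}(t_k)$ from \eqref{eq.majseq}, we obtain $x_{k+1}\in K(t_{k+1})$. Each step is well defined because $K(t_k)\subset B(x_0,t_*)$, the region on which $N_{F+T}$ is single-valued (as established from Lemma~\ref{le:wdns} and Remark~\ref{def.good}). The containment $K(t_k)\subset B(x_0,t_*)$ also shows $\{x_k\}\subset B(x_0,t_*)$.

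Next I would prove that $\{x_k\}$ is Cauchy. Membership $x_k\in K(t_k)$ gives directly from \eqref{eq:ker} and \eqref{eq.majorfunc} the one-step bound
$$\|x_{k+1}-x_k\|=\|L_F(x_k,0)^{-1}-x_k\|\leq -\frac{f(t_k)}{f'(t_k)}=n_{f}(t_k)-t_k=t_{k+1}-t_k.$$
Summing telescopically yields $\|x_{k+p}-x_k\|\leq t_{k+p}-t_k$ for all $p\geq 0$. By Corollary~\ref{major.convergence} the sequence $\{t_k\}$ converges to $t_*$, hence is Cauchy, and therefore so is $\{x_k\}$; completeness of $\banachh$ provides a limit $x_*=\lim_k x_k$, which lies in the closed ball $B[x_0,t_*]$. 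Letting $p\to\infty$ in the telescoped estimate gives the asserted bound $\|x_*-x_k\|\leq t_*-t_k$.

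The crux, and the step I expect to be the main obstacle, is the inclusion $0\in F(x_*)+T(x_*)$, which demands passing to the limit in a set-valued relation. I would rewrite \eqref{eq:DNS} as
$$w_k:=-F(x_k)-F'(x_k)(x_{k+1}-x_k)\in T(x_{k+1}),$$
so that $(x_{k+1},w_k)$ lies in the graph of $T$. The one-step bound forces $x_{k+1}-x_k\to 0$, and since $x_k\to x_*$ with $F$ and $F'$ continuous and $\|F'(x_k)\|$ bounded near $x_*$, we get $w_k\to -F(x_*)$ in norm while $x_{k+1}\to x_*$. To conclude that the limit pair $(x_*,-F(x_*))$ again belongs to the graph of $T$ is precisely where maximal monotonicity is needed: for any $y\in\mathrm{dom}\,T$ and $v\in T(y)$, monotonicity gives $\langle w_k-v,\,x_{k+1}-y\rangle\geq 0$, and passing to the limit preserves this, so that $\langle -F(x_*)-v,\,x_*-y\rangle\geq 0$ for every such $(y,v)$. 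Maximality of $T$ then forces $-F(x_*)\in T(x_*)$, i.e.\ $0\in F(x_*)+T(x_*)$, which completes the proof.
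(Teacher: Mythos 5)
Your proposal is correct and follows essentially the same route as the paper: the base case $x_0\in K(0)$ from \eqref{eq.ipoint1}, Remark~\ref{eq.initialpoint} and {\bf h1}, the induction $x_k\in K(t_k)$ via Lemma~\ref{le:cl} together with \eqref{eq.seq} and \eqref{eq.majseq}, the telescoped bound $\|x_{k+1}-x_k\|\leq t_{k+1}-t_k$ yielding a Cauchy sequence with limit $x_*\in B[x_0,t_*]$, and maximal monotonicity for the limit inclusion. In fact you spell out the final graph-closedness step (passing to the limit in $\langle w_k-v, x_{k+1}-y\rangle\geq 0$ and invoking maximality to get $-F(x_*)\in T(x_*)$) more explicitly than the paper, which merely asserts this conclusion from continuity of $F$ and maximal monotonicity of $T$.
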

\begin{proof}
From assumption in \eqref{eq.ipoint1}, Remark~\ref{eq.initialpoint},  assumption ${\bf h1}$ and definitions \eqref{eq:ker}  and \eqref{eq:kt}   we have
\begin{equation} \label{eq:fs}
\{x_0\}={K}(0)\subset {K}.
\end{equation}
We know from Lemma~\ref{le:cl} that $N_{F+T}({K}) \subset {K}$. Thus, using \eqref{eq:fs} and \eqref{eq.seq}  we conclude that the sequence $\{x_k\}$ is well defined and rests in ${K}.$  From the first inclusion on second part of the Lemma~\ref{le:cl} we have trivially that $\{x_k\}$ is contained in $B(x_0,t_ *).$  To prove the convergence,  first we are going to prove by induction  that 
\begin{equation}\label{eq.defseq}
x_k\in {K}(t_k), \qquad k=0,1\ldots.
\end{equation}
The above inclusion, for $k=0$,  follows from \eqref{eq:fs}. Assume now that $x_k\in {K}(t_k).$ Then combining Lemma~\ref{le:cl}, \eqref{eq.seq} and \eqref{eq.majorfunc} we conclude that $x_{k+1}\in {K}(t_{k+1}),$ which completes the induction proof. Now,  using \eqref{eq.defseq} and \eqref{eq:ker} we have 
$$
\|L_F(x_k,0)^{-1}-x_k\| \leq -\frac{f(t_k)}{f'(t_k)}, \qquad k=0,1 \ldots,
$$
which, combined with  \eqref{eq.seq} and \eqref{eq:DNS}  becomes 
\begin{equation}\label{des.conver}
\|x_{k+1}-x_k\|\leq t_{k+1}-t_k, \qquad k=0,1 \ldots.
\end{equation}
Taking into account that  $\{t_k\}$ converges to $t_ *,$   we  easily  conclude from  the above inequality  that
$$
\sum_{k=k_0}^{\infty} \|x_{k+1}-x_k\| \leq \sum_{k=k_0}^{\infty} t_{k+1}-t_k =t_ *- t_{k_0} < +\infty,
$$
for any $k_0 \in \mathbb{N}.$ Hence, we conclude that $\{x_k\}$ is a Cauchy sequence in $B(x_0, t_ *)$ and thus it  converges to some $x_* \in B[x_0, t_ *].$  Therefore, using  again \eqref{des.conver} we also conclude  that the inequality in the corollary holds.  

Since $-F(x_k)-F'(x_k)(x_{k+1}-x_k)\in T(x_{k+1})$, $F$ is a continuously differentiable mapping, $x_k$ converges to $x_*$ and $T$ is maximal monotone, we conclude that $0 \in F(x_*)+T(x_*)$.
\end{proof}

We have already proved that   the sequence $\{x_k\}$ converges to a solution $x_*$ of generalized equation $F(x)+T(x)\ni 0$ and $x_*\in B(x_0,t_*)$. Now, we will prove that this convergence is $Q$-linearly and that $x^*$ is the unique solution of $F(x)+T(x)\ni 0$ in $B(x_0,t_*)$. Furthermore, by assuming that $f$ satisfies ${\bf h4}$, we will also prove that $\{x_k\}$ converges $Q$-quadratically to $x_*$.  For that, we need of the following result: 
\begin{lemma}\label{ine.rates}
Take $x,y\in B(x_0,R)$ and $0\leq t<v<R$. If
\begin{equation} \label{eq:siqc}
t\leq t^*,\quad \|x-x_0\|\leq t, \quad \|y-x\|\leq v-t, \quad f(v)\leq 0,  \quad 0\in F(y)+T(y), 
\end{equation}
then the following inequality holds
$$
\|y-N_{F+T}(x)\|\leq [v-n_{f}(t)]\frac{\|y-x\|^2}{(v-t)^2}.
$$
\end{lemma}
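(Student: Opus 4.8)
The plan is to follow the monotonicity-plus-positivity scheme already used in the proof of Lemma~\ref{le:cl}, but now comparing the Newton iterate $x_+:=N_{F+T}(x)$ with the genuine solution $y$ of $0\in F(y)+T(y)$. First I would record the two inclusions $-F(y)\in T(y)$ and $-F(x)-F'(x)(x_+-x)\in T(x_+)$, the latter being just \eqref{eq:NFef}. Applying the monotonicity of $T$ to the pair $(y,x_+)$ gives
$$
\langle -F(y)+F(x)+F'(x)(x_+-x),\; y-x_+\rangle\geq 0.
$$

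Next I would express the first slot through the linearization error. Adding and subtracting $F'(x)(y-x)$ and invoking \eqref{eq:def.er} gives the identity $-F(y)+F(x)+F'(x)(x_+-x)=-E_F(x,y)-F'(x)(y-x_+)$, so the inequality becomes
$$
\langle F'(x)(y-x_+),\,y-x_+\rangle\leq \langle -E_F(x,y),\,y-x_+\rangle\leq \|E_F(x,y)\|\,\|y-x_+\|.
$$
Because $\langle F'(x)u,u\rangle=\langle\widehat{F'(x)}u,u\rangle$ (the skew part contributes nothing) and, by Lemma~\ref{le:wdns}, $\widehat{F'(x)}$ is positive with invertible $\widehat{F'(x)}^{-1}$ when $\|x-x_0\|\leq t<t^*$, Lemma~\ref{eq:adjunt} yields $\|y-x_+\|^2/\|\widehat{F'(x)}^{-1}\|\leq\langle F'(x)(y-x_+),y-x_+\rangle$. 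Cancelling one factor of $\|y-x_+\|$ leaves the clean estimate $\|y-x_+\|\leq\|\widehat{F'(x)}^{-1}\|\,\|E_F(x,y)\|$.

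It then remains to substitute the two available bounds. The second part of Lemma~\ref{le:wdns} gives $\|\widehat{F'(x)}^{-1}\|\leq-\|\widehat{F'(x_0)}^{-1}\|/f'(t)$, while Lemma~\ref{pr:taylor} (whose hypotheses $\|x-x_0\|\leq t$, $\|y-x\|\leq v-t$, $t<v<R$ are exactly those assumed here) bounds $\|\widehat{F'(x_0)}^{-1}\|\,\|E_F(x,y)\|$ by $e_f(t,v)\|y-x\|^2/(v-t)^2$. Multiplying these produces $\|y-x_+\|\leq -(e_f(t,v)/f'(t))\,\|y-x\|^2/(v-t)^2$.

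Finally I would simplify the scalar coefficient. From $e_f(t,v)=f(v)-f(t)-f'(t)(v-t)$ and $n_f(t)=t-f(t)/f'(t)$ one computes $-e_f(t,v)/f'(t)=(v-n_f(t))-f(v)/f'(t)$; since $f(v)\leq 0$ and $f'(t)<0$ the quantity $f(v)/f'(t)$ is nonnegative, whence $-e_f(t,v)/f'(t)\leq v-n_f(t)$ and the claimed inequality follows. The step requiring the most care is the rewriting in the second paragraph, namely isolating $E_F(x,y)$ correctly and using that the antisymmetric part of $F'(x)$ drops out of the quadratic form; I would also note the minor boundary point that Lemma~\ref{le:wdns} is phrased for $t<t^*$, so the endpoint $t=t^*$ must be reached by a limiting/continuity argument.
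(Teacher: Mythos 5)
Your proposal is correct and follows essentially the same route as the paper's proof: monotonicity of $T$ applied to the inclusions for $y$ and $x_+=N_{F+T}(x)$, positivity of $\widehat{F'(x)}$ via Lemma~\ref{le:wdns} and Lemma~\ref{eq:adjunt} to cancel a factor and get $\|y-x_+\|\leq\|\widehat{F'(x)}^{-1}\|\,\|E_F(x,y)\|$, then Lemma~\ref{pr:taylor} and the scalar identity $-e_f(t,v)/f'(t)=v-n_f(t)-f(v)/f'(t)$ with $f(v)\leq 0$. Your closing remark about the endpoint $t=t^*$ is in fact slightly more careful than the paper, whose proof silently assumes $t<t^*$ despite the hypothesis $t\leq t^*$.
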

\begin{proof}
For simplify the notations define $z= N_{F+T} (x)$. Since $0\in F(y)+T(y)$ using \eqref{eq:NFef} and that $T$ is maximal monotone we have
$$
\langle -F(x)+F'(x)(x-z)+F(y), z-y\rangle \geq 0, 
$$
which, after simple manipulations, implies that
\begin{equation}\label{eq.monmax1}
\langle F(y)-F(x)+F'(x)(x-y), z-y\rangle \geq \langle F'(x)(z-y),z-y\rangle.
\end{equation}
Since $\|x-x_0\|\leq t<t^*$ we obtain by Lemma~\ref{le:wdns} that $\widehat{F'(x)}$ is a positive operator and $\widehat{F'(x)}^{-1}$ exists. Thus from Lemma~\ref{eq:adjunt} we have 
 \begin{equation}\label{eq.max}
\frac{\|z-y\|^2}{\|\widehat{F'(x)}^{-1}\|}\leq  \langle \widehat{F'(x)}(z-y),z-y\rangle.
\end{equation}
Combining $\langle \widehat{F'(x)}(z-y),z-y\rangle = \langle F'(x)(z-y), z-y\rangle,$ with \eqref{eq.max} and \eqref{eq.monmax1} yields that
\begin{equation*}\label{eq.monmax2}
\|y-z\|^2 \leq \|\widehat{F'(x)}^{-1}\| \langle F'(x)(z-y),z-y\rangle \leq \|\widehat{F'(x)}^{-1}\| \langle F(y)-F(x)+F'(x)(x-y), z-y\rangle.
\end{equation*}
Hence, after simple manipulations, above inequality becomes
\begin{equation}\label{eq.max20}
\|y-z\| \leq \|\widehat{F'(x)}^{-1}\| \|F(y)-F(x)+F'(x)(x-y)\|.
\end{equation}
Now, using Lemma~\ref{le:wdns} and Lemma~\ref{pr:taylor} together with the assumptions of the lemma we obtain 
$$
\|y-z\| \leq \frac{-1}{f'(t)}e_{f}(t, v)\frac{\|y-x\|^2}{(v-t)^2}.
$$
As, $0\leq t<t_ *,$ $f'(t)<0$. Using definition of $e_{f}(t, v)$, \eqref{eq.majorfunc} and the assumption $f(v)\leq 0$ we have
$$
-\frac{e_{f}(t,v)}{f'(t)}=v-t + \frac{f(t)}{f'(t)}-\frac{f(v)}{f'(t)} \leq v-t + \frac{f(t)}{f'(t)}=v-n_{f}(t).
$$
To end the proof, combine the two above inequalities. 
\end{proof}

\begin{corollary}
The sequences  $\{x_k\}$ and $\{t_k\}$ satisfy  the following inequality
\begin{equation}\label{ine.quadr}
\|x_*-x_{k+1}\|\leq \frac{t_*-t_{k+1}}{(t_* -t_k)^2}\|x_*-x_k\|^2, \qquad k=0,1\ldots.
\end{equation}
As a consequence,  the  sequence  $\{x_k\}$ converges  $Q$-linearly to the solution  $x^*$ as follows
\begin{equation}\label{ine.quadr1}
\|x_*-x_{k+1}\|\leq \frac{1}{2} \|x_*-x_k\|, \qquad k=0,1\ldots.
\end{equation}
Additionally, if $f$ satisfies ${\bf h4}$ then the  sequence $\{x_k\}$ converges $Q$-quadratically to $x_*$  as follows
\begin{equation}\label{ine.quadr2}
\|x_*-x_{k+1}\| \leq \frac{D^{-}f'(t_*)}{-2f'(t_*)}\|x_*-x_k\|^2, \qquad k=0,1\ldots.
\end{equation}
\end{corollary}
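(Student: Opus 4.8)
The plan is to obtain all three displayed inequalities from a single application of Lemma~\ref{ine.rates}, feeding it the facts about $x_*$ and the majorant sequence that are already in hand. First I would fix $k$ and instantiate Lemma~\ref{ine.rates} with $x=x_k$, $y=x_*$, $t=t_k$ and $v=t_*$. The substance of the argument is verifying the five hypotheses collected in \eqref{eq:siqc}: both $t_k\le t_*$ and $\|x_k-x_0\|\le t_k$ follow from $x_k\in K(t_k)$ (Corollary~\ref{res.solution}); the estimate $\|x_*-x_k\|\le t_*-t_k$ is the last conclusion of that same corollary; $f(t_*)=0\le 0$ because $t_*$ is the smallest zero of $f$; and $0\in F(x_*)+T(x_*)$ is the solution property, again from Corollary~\ref{res.solution}. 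The strict inequality $t_k<t_*$ needed to form the quotient is guaranteed because $\{t_k\}$ is strictly increasing and contained in $[0,t_*)$.

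Once the hypotheses are checked, Lemma~\ref{ine.rates} delivers $\|x_*-N_{F+T}(x_k)\|\le[t_*-n_f(t_k)]\,\|x_*-x_k\|^2/(t_*-t_k)^2$. Recalling the identifications $x_{k+1}=N_{F+T}(x_k)$ and $t_{k+1}=n_f(t_k)$, this is exactly \eqref{ine.quadr}, so the first assertion is immediate.

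The two rates then fall out by comparison with the scalar sequence. For the $Q$-linear rate \eqref{ine.quadr1}, I would bound one factor $\|x_*-x_k\|$ in \eqref{ine.quadr} by $t_*-t_k$ (Corollary~\ref{res.solution}) to get $\|x_*-x_{k+1}\|\le[(t_*-t_{k+1})/(t_*-t_k)]\,\|x_*-x_k\|$, and then invoke $t_*-t_{k+1}=t_*-n_f(t_k)\le\tfrac{1}{2}(t_*-t_k)$ from Lemma~\ref{eq.ratemajor}, which forces the coefficient below $\tfrac{1}{2}$. For the $Q$-quadratic rate \eqref{ine.quadr2} under {\bf h4}, I would instead substitute the sharper majorant estimate $t_*-t_{k+1}\le[D^-f'(t_*)/(-2f'(t_*))](t_*-t_k)^2$ of Lemma~\ref{eq.ratemajor} directly into the coefficient $(t_*-t_{k+1})/(t_*-t_k)^2$ of \eqref{ine.quadr}, collapsing it to the constant $D^-f'(t_*)/(-2f'(t_*))$.

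There is no serious obstacle: the corollary is essentially a transcription into the space $\banachh$ of the scalar convergence behaviour of $\{t_k\}$ recorded in Corollary~\ref{major.convergence}. If anything calls for care, it is only the bookkeeping of which earlier result supplies each hypothesis of Lemma~\ref{ine.rates}, together with the observation that the strict monotonicity of $\{t_k\}$ keeps the denominator $(t_*-t_k)^2$ away from zero.
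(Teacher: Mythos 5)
Your proposal is correct and follows essentially the same route as the paper's proof: one application of Lemma~\ref{ine.rates} with $x=x_k$, $y=x_*$, $t=t_k$, $v=t_*$ gives \eqref{ine.quadr}, and the linear and quadratic rates then follow by combining it with $\|x_*-x_k\|\leq t_*-t_k$ and the estimates for $t_*-t_{k+1}$ from Lemma~\ref{eq.ratemajor} (via Corollary~\ref{major.convergence}). Your explicit verification of the hypotheses \eqref{eq:siqc}, which the paper leaves implicit, is a welcome but inessential refinement of the same argument.
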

\begin{proof}
For each  $k$,  we can apply  Lemma~\ref{ine.rates} with $x=x_k,$ $y=x_*,$ $t=t_k$ and $v=t_*,$ to obtain
$$
\|x_*-N_{F+T}(x_k)\|\leq [t_*-n_{f}(t_k)]\frac{\|x_*-x_k\|^2}{(t_*-t_k)^2}.
$$
Thus  inequality \eqref{ine.quadr} follows from the above inequality, \eqref{eq.seq} and \eqref{eq.majseq}. Note that by the first part in Lemma~\ref{eq.ratemajor}, \eqref{eq.majseq} and  Corollary~\ref{res.solution} we have 
$$
\frac{t_*-t_{k+1}}{t_*-t_k}\leq \frac{1}{2}, \qquad  \qquad  \frac{\|x_*-x_k\|}{t_*-t_k}\leq 1.
$$
Combining these inequalities with \eqref{ine.quadr} we obtain \eqref{ine.quadr1}.
Now, assume that ${\bf h4}$ holds. Then, by Corollary~\ref{major.convergence}, the second inequality on \eqref{ine.rates1} holds, which combined with \eqref{ine.quadr} imply \eqref{ine.quadr2}. 
\end{proof}

\begin{corollary}
The limit $x_*$ of the sequence $\{x_k\}$ is the unique solution of the generalized equation $F(x)+T(x)\ni 0$ in $B[x_0, t_ *]$.
\end{corollary}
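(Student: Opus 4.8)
The plan is to show that any solution $y_*\in B[x_0,t_*]$ of $F(x)+T(x)\ni 0$ must coincide with the limit $x_*$ produced in Corollary~\ref{res.solution}. Since the Newton sequence $\{x_k\}$ already converges to $x_*$, it suffices to prove that $\{x_k\}$ also converges to $y_*$; uniqueness of limits in the Hilbert space $\banachh$ then forces $y_*=x_*$. I would extract convergence of $\{x_k\}$ to $y_*$ from the quantitative estimate $\|y_*-x_k\|\leq t_*-t_k$, valid for every $k$, together with the fact that $t_k\uparrow t_*$ recorded in Corollary~\ref{major.convergence}.

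The core step is to establish the bound $\|y_*-x_k\|\leq t_*-t_k$ by induction on $k$. For $k=0$ this is immediate, since $t_0=0$ and $y_*\in B[x_0,t_*]$ gives $\|y_*-x_0\|\leq t_*$. For the inductive step, I would assume $\|y_*-x_k\|\leq t_*-t_k$ and apply Lemma~\ref{ine.rates} with $x=x_k$, $y=y_*$, $t=t_k$ and $v=t_*$. The conditions in \eqref{eq:siqc} are all met: $t_k\leq t_*$ and $\|x_k-x_0\|\leq t_k$ hold because $x_k\in K(t_k)$ by Corollary~\ref{res.solution}; the requirement $\|y_*-x_k\|\leq t_*-t_k$ is exactly the induction hypothesis; $f(t_*)\leq 0$ holds because $t_*$ is the smallest root of $f$, so in fact $f(t_*)=0$; and $0\in F(y_*)+T(y_*)$ holds by assumption on $y_*$. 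Using $N_{F+T}(x_k)=x_{k+1}$ and $n_{f}(t_k)=t_{k+1}$, Lemma~\ref{ine.rates} then yields
\begin{equation*}
\|y_*-x_{k+1}\|\leq [t_*-t_{k+1}]\,\frac{\|y_*-x_k\|^2}{(t_*-t_k)^2}.
\end{equation*}
Feeding the induction hypothesis $\|y_*-x_k\|\leq t_*-t_k$ back into the right-hand side makes the fraction at most $1$, so $\|y_*-x_{k+1}\|\leq t_*-t_{k+1}$, which closes the induction.

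With $\|y_*-x_k\|\leq t_*-t_k$ available for all $k$, I would let $k\to\infty$: since $t_*-t_k\to 0$, it follows that $x_k\to y_*$, and comparing with $x_k\to x_*$ gives $y_*=x_*$, establishing uniqueness in $B[x_0,t_*]$. The main obstacle is the inductive setup itself: one must carry the estimate $\|y_*-x_k\|\leq t_*-t_k$ as the induction hypothesis precisely because it plays a double role, serving both as the hypothesis $\|y-x\|\leq v-t$ needed to invoke Lemma~\ref{ine.rates} and as the bound that tames the quadratic factor $\|y_*-x_k\|^2/(t_*-t_k)^2\leq 1$; without this self-reinforcing structure the raw estimate would not immediately close. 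The other delicate point is the admissibility of the choice $v=t_*$, namely the inequality $f(t_*)\leq 0$, which is exactly where the characterization of $t_*$ as the smallest zero of $f$ is essential.
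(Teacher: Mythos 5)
Your proposal is correct and takes essentially the same route as the paper's own proof: the identical induction on the bound $\|y_*-x_k\|\leq t_*-t_k$, closed by applying Lemma~\ref{ine.rates} with $x=x_k$, $y=y_*$, $t=t_k$, $v=t_*$ and using the induction hypothesis to tame the quadratic factor, followed by passing to the limit $t_k\to t_*$ to conclude $y_*=x_*$. Your explicit verification of the hypotheses of Lemma~\ref{ine.rates} (in particular $f(t_*)=0$ since $t_*$ is the smallest zero, and $\|x_k-x_0\|\leq t_k$ via $x_k\in K(t_k)$ from Corollary~\ref{res.solution}) is, if anything, slightly more detailed than the paper's write-up.
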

\begin{proof}
Suppose there exist $y_* \in B[x_0, t_ *]$ such that $y_*$ is solution of $F(x)+T(x)\ni 0$. We will prove by induction that
\begin{equation}\label{iq.indu}
\|y_*-x_k\|\leq t_*-t_k, \qquad  k=0,1,\ldots.
\end{equation}
The case $k=0$ is trivial, because $t_0=0$ and $y_* \in  B[x_0,t_*]$. We assume that the inequality holds for some $k$.  First note that Corollary~\ref{res.solution} implies that  $x_k\in {K}(t_k)$, for $k=0,1\ldots$. Thus, from definition of ${K}(t_k)$  we conclude that   $\|x_k-x_0\|\leq t_k$, for $k=0,1\ldots$. Since $\|x_k-x_0\|\leq t_k$, we may apply Lemma~\ref{ine.rates} with $x=x_k$, $y=y_*$, $t=t_k$ and $v=t_*$ to obtain
$$
\|y_*-N_{F+T}(x_k)\|\leq [t_*-n_{f}(t_k)]\frac{\|y_*-x_k\|^2}{(t_*-t_k)^2}.
$$ 
Using inductive hypothesis, \eqref{eq.seq} and \eqref{eq.majseq} we obtain, from latter inequality, that \eqref{iq.indu} holds for $k+1$. Since $x_k$ converges to $x_*$ and $t_k$ converges to $t_*$, from \eqref{iq.indu} we conclude that $y_*=x_*$. Therefore, $x_*$ is the unique solution of $F(x)+T(x)\ni 0$ in $B[x_0,t_*]$.
\end{proof}
\section{Some special cases} \label{apl}
In this section, we will present some  special cases of  Theorem \ref{th:nt}. When    $F\equiv \{0\}$  and    $f'$ satisfies a Lipschitz-type condition,  we will obtain a particular instance of Theorem~\ref{th:nt}, which retrieves  the classical convergence theorem on Newton's method under the Lipschitz condition; see \cite{Rall1974, Traub1979}.  A version of Smale's theorem on Newton's method for analytical functions is obtained in Theorem~\ref{theo:Smale}.
\subsection{Under Lipschitz-type condition}
In this section,  we will present a version of  classical convergence theorem for Newton's method under Lipschitz-type condition  for generalized equations. The classical version for  $F\equiv \{0\}$   have  appeared   in  Rall \cite{Rall1974} and Traub and  Wozniakowski \cite{Traub1979}.
\begin{theorem} \label{th:cc}
Let $\banachh$ be a Hilbert space, $\Omega$ be an open nonempty subset of $\banachh$, $F: \Omega \to \banachh$ be continuous with Fr\'echet derivative $F'$ continuous, $T:\banachh \rightrightarrows  \banachh$ be a set-valued operator and $x_0\in \Omega$. Suppose that $F'(x_0)$ is a positive operator and $\widehat{F'(x_0)}^{-1}$ exists and, there exists a constant $K>0$ such that  $B(x_0, 1/K)\subset \Omega$ and
\begin{equation} \label{eq:hc}
\|\widehat{F'(x_0)}^{-1}\| \|f'(x)-f'(y)\| \leq K \|x-y\|,\qquad x,\, y\in B(x_0, 1/K).
\end{equation}
 Moreover, suppose that
	\begin{equation} \label{eq.ipoint}
	\|x_1-x_0\|\leq b.
	\end{equation}
Then, the sequence $\{x_k\}$ generated by Newton's method for solving $F(x)+T(x)\ni 0$ with starting point $x_0$
\begin{equation} \label{eq:Kant}
 F(x_k)+F'(x_k)(x_{k+1}-x_k)+T(x_{k+1})\ni 0, \qquad k=0,1,\ldots\,, 
\end{equation}
is well defined,  is contained in $B(x_0, t^*)$ and  converges to the point $x^*$ which is the unique solution of $F(x)+T(x)\ni 0$ in $B(x_0, t^*)$, where $t^*= 1-\sqrt{1-2bK}/{K}$. Moreover, the sequence $\{x_k\}$ satisfies for  any $k=0,1,\ldots,$
$$
 \|x_*-x_{k+1}\| \leq \frac{K}{2\sqrt{1-2bK}}\|x_*-x_k\|^2.
$$
\end{theorem}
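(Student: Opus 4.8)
The plan is to deduce Theorem~\ref{th:cc} as a direct specialization of Theorem~\ref{th:nt}, by exhibiting the classical Kantorovich quadratic as the majorant function. Concretely, I would take $R=1/K$ and define $f:[0,1/K)\to\mathbb{R}$ by
\[
f(t)=\frac{K}{2}t^2-t+b .
\]
The domain hypothesis $B(x_0,1/K)\subset\Omega$ of Theorem~\ref{th:cc} forces $\kappa=1/K=R$ in the notation of Theorem~\ref{th:nt}, so the majorant inequality \eqref{Hyp:MH} is required exactly on $B(x_0,\kappa)=B(x_0,1/K)$, which is precisely the region on which \eqref{eq:hc} is assumed.

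Next I would verify that this $f$ meets every hypothesis of Theorem~\ref{th:nt}. Since $f'(t)=Kt-1$, the right-hand side of \eqref{Hyp:MH} telescopes to
\[
f'\bigl(\|x-y\|+\|x-x_0\|\bigr)-f'\bigl(\|x-x_0\|\bigr)=K\|x-y\| ,
\]
so \eqref{Hyp:MH} reduces verbatim to the Lipschitz condition \eqref{eq:hc} on $F'$; likewise the starting estimate \eqref{eq.ipoint1} becomes $\|x_1-x_0\|\le f(0)=b$, which is \eqref{eq.ipoint}. For the structural conditions, $f(0)=b>0$ and $f'(0)=-1$ give \textbf{h1}, while $f'(t)=Kt-1$ is affine with positive slope, hence convex and strictly increasing, giving \textbf{h2}. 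The conditions \textbf{h3} and \textbf{h4} hinge on the discriminant: solving $Kt^2-2t+2b=0$ yields the smallest root
\[
t^*=\frac{1-\sqrt{1-2bK}}{K},
\]
which lies in $(0,1/K)=(0,R)$ exactly when $0<2bK\le 1$; this is the implicit Kantorovich smallness assumption under which the statement is posed, and it supplies \textbf{h3}. Under the strict inequality $2bK<1$ (needed for the quadratic rate to be finite) one computes $f'(t^*)=Kt^*-1=-\sqrt{1-2bK}<0$, which is \textbf{h4}.

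With all hypotheses in place, Theorem~\ref{th:nt} applies and delivers well-definiteness of $\{x_k\}$, its containment in $B(x_0,t^*)$, convergence to the unique solution $x^*$ of $F(x)+T(x)\ni 0$ in $B[x_0,t^*]$, and the estimates \eqref{eq:q2}--\eqref{ine.rates1}. It then remains only to translate the abstract constant of \eqref{ine.rates1} into the stated one: because $f'$ is affine it is differentiable, so $D^-f'(t^*)=f''(t^*)=K$, and with $-2f'(t^*)=2\sqrt{1-2bK}$ this gives
\[
\frac{D^-f'(t^*)}{-2f'(t^*)}=\frac{K}{2\sqrt{1-2bK}},
\]
exactly the constant appearing in the conclusion. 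I expect no substantive obstacle, as the argument is a pure verification; the only points demanding care are making the smallness hypothesis $2bK<1$ explicit (without it $f$ has no real root and \textbf{h3}, \textbf{h4} fail) and confirming $t^*<R=1/K$, so that the solution produced by Theorem~\ref{th:nt} genuinely lies in the ball $B(x_0,1/K)$ on which the Lipschitz bound \eqref{eq:hc} was posited.
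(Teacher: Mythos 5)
Your proposal is correct and follows exactly the paper's own route: the paper's proof is a one-line invocation of Theorem~\ref{th:nt} with the same quadratic majorant $f(t)=(K/2)t^2-t+b$, and your verification of \textbf{h1}--\textbf{h4}, of the reduction of \eqref{Hyp:MH} to \eqref{eq:hc}, and of the constant $D^-f'(t^*)/(-2f'(t^*))=K/(2\sqrt{1-2bK})$ simply spells out the details the paper leaves implicit. Your added observation that the smallness condition $2bK\le 1$ (strict for the quadratic rate) must hold for \textbf{h3} and \textbf{h4} is a point the paper's statement indeed leaves unstated, so making it explicit is a welcome clarification rather than a deviation.
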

\begin{proof} 
Since $f:[0,1/K)\to \mathbb{R},$  defined by $f(t):=(K/2)t^2-t+b,$ is a majorant function for $F$ at point $x_0$, the result follows by invoking Theorem~\ref{th:nt}, applied to this particular context.
\end{proof}
\begin{remark}
The  above result  contain,  as particular instance,  several   theorem on Newton's method; see,  for example, Rall \cite{Rall1974},  Traub and  Wozniakowski \cite{Traub1979} and Daniel \cite{Daniel1973}. See also \cite{Wang2015_1}.
\end{remark}

\subsection{Under Smale's-type condition}
In this section,  we will present a version of  classical convergence theorem for Newton's method under Smale's-type condition for generalized equations. The classical version  has  appeared   in   corollary of Proposition 3 pp.~195 of Smale \cite{Smale1986}, see also Proposition 1 pp.~157 and Remark 1 pp.~158 of  Blum, Cucker,  Shub, and Smale~\cite{BlumSmale1998}; see also \cite{Ferreira2009}.

\begin{theorem} \label{theo:Smale}
Let $\banachh$ be a Hilbert space, $\Omega$ be an open nonempty subset of $\banachh$, $F: \Omega \to \banachh$ be an analytic function, $T:\banachh \rightrightarrows  \banachh$ be a set-valued operator and $x_0\in \Omega$. Suppose that $F'(x_0)$ is a positive operator and $\widehat{F'(x_0)}^{-1}$ exists. Suppose that   
\begin{equation} \label{eq:SmaleCond}
   \gamma:= \|\widehat{F'(x_0)}^{-1}\|\sup _{ n > 1 }\left\| \frac  {F^{(n)}(x_0)}{n!}\right\|^{1/(n-1)}<+\infty.
\end{equation}
 Moreover, suppose that $B(x_0,1/\gamma)$ and there exists $b>0$ such that
	\begin{equation} \label{eq.ipoint}
	\|x_1-x_0\|\leq b
	\end{equation}
and $\alpha := b \gamma\leq 3-2\sqrt{2}$. Then the sequence $\{x_k\}$ generated by Newton's method for solving $F(x)+T(x)\ni 0$ with starting point $x_0$
\begin{equation} \label{eq:Kant}
 F(x_k)+F'(x_k)(x_{k+1}-x_k)+T(x_{k+1})\ni 0, \qquad k=0,1,\ldots\,, 
\end{equation}
is well defined,  is contained in $B(x_0, t^*)$ and  converges to the point $x^*$ which is the unique solution of $F(x)+T(x)\ni 0$ in $B[x_0, t^*]$, where $t_ *=(\alpha +1-\sqrt{(\alpha+1)^2 -8\alpha})/4\gamma$.
Moreover, $\{x_k\}$ converges $Q$-linearly as follows
  \[
 \|x_*-x_{k+1}\| \leq \frac{1}{2}\|x_* -x_k\|, \qquad k=0,1,\ldots.
  \]
Additionally, if $\alpha < 3-2\sqrt{2}$, then $\{x_k\}$ converges $Q$-quadratically as follows
	$$
	\|x_*-x_{k+1}\| \leq \frac{\gamma}{(1-\gamma t_ *)[2(1-\gamma t_ *)^2-1]}\|x_*-x_k\|^2,\qquad k=0,1,\ldots.
	$$
\end{theorem}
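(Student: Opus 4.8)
The plan is to deduce the theorem from Theorem~\ref{th:nt} by producing the explicit majorant function that is natural in the analytic, $\gamma$-bounded setting, exactly as in Smale's treatment of the scalar equation. Concretely, I would take $R=1/\gamma$ and define $f:[0,1/\gamma)\to\mathbb{R}$ by
\[
f(t):=b-2t+\frac{t}{1-\gamma t}.
\]
A direct computation gives $f(0)=b>0$, $f'(t)=\frac{1}{(1-\gamma t)^2}-2$ (so $f'(0)=-1$), and $f''(t)=\frac{2\gamma}{(1-\gamma t)^3}>0$ with $f'''(t)>0$ on $[0,1/\gamma)$; hence \textbf{h1} and \textbf{h2} hold at once. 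The smallest zero together with \textbf{h3} and \textbf{h4} is then purely algebraic: clearing denominators in $f(t)=0$ and writing $s=\gamma t$, $\alpha=b\gamma$, yields $2s^2-(\alpha+1)s+\alpha=0$, whose discriminant $(\alpha+1)^2-8\alpha$ is nonnegative precisely when $\alpha\le 3-2\sqrt{2}$. Under that hypothesis $f$ has the claimed smallest root $t_*=(\alpha+1-\sqrt{(\alpha+1)^2-8\alpha})/(4\gamma)$, and one checks directly that $t_*<1/\gamma=R$, so \textbf{h3} holds; moreover $f'(t_*)<0$ (i.e.\ \textbf{h4}) holds exactly when the discriminant is strictly positive, that is $\alpha<3-2\sqrt{2}$.

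The heart of the argument, and the step I expect to be the main obstacle, is to verify that this $f$ is genuinely a majorant function for $F$ at $x_0$ in the sense of \eqref{Hyp:MH}. Following Smale, I would first establish the pointwise derivative estimate
\[
\|\widehat{F'(x_0)}^{-1}\|\,\|F''(x)\|\le \frac{2\gamma}{(1-\gamma\|x-x_0\|)^3}=f''(\|x-x_0\|),\qquad \|x-x_0\|<1/\gamma.
\]
This comes from the Taylor expansion $F''(x)=\sum_{n\ge 2}\frac{F^{(n)}(x_0)}{(n-2)!}(x-x_0)^{n-2}$ of the analytic map, bounding $\|\widehat{F'(x_0)}^{-1}\|\,\|F^{(n)}(x_0)\|$ by $n!\,\gamma^{n-1}$ via the definition \eqref{eq:SmaleCond} of $\gamma$, and summing the resulting series $\gamma\sum_{n\ge 2}n(n-1)(\gamma\|x-x_0\|)^{n-2}=2\gamma/(1-\gamma\|x-x_0\|)^3$. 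Once this bound is in hand, integrating $F''$ along the segment joining $x$ to $y$ and comparing with the corresponding integral of the monotone $f''$ gives \eqref{Hyp:MH}; the only care needed is in tracking the base argument $\|x-x_0\|$ against the increment $\|x-y\|$ inside $f'$, which is compatible with the fact that $f''$ is increasing.

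With the majorant function verified, the conclusion follows by invoking Theorem~\ref{th:nt} in this context, the inclusion $B(x_0,1/\gamma)\subset\Omega$ supplying the ambient hypothesis on $\kappa$: well-definedness of $\{x_k\}$, its containment in $B(x_0,t_*)$, convergence to the unique solution $x_*\in B[x_0,t_*]$, and the $Q$-linear rate \eqref{eq:rates0} are immediate. For the explicit constant in the quadratic case I would substitute into \eqref{ine.rates1}: since $f$ is analytic, $D^{-}f'(t_*)=f''(t_*)=2\gamma/(1-\gamma t_*)^3$, while $-f'(t_*)=2-(1-\gamma t_*)^{-2}=[2(1-\gamma t_*)^2-1]/(1-\gamma t_*)^2$, so that
\[
\frac{D^{-}f'(t_*)}{-2f'(t_*)}=\frac{\gamma}{(1-\gamma t_*)[2(1-\gamma t_*)^2-1]},
\]
which is exactly the stated $Q$-quadratic rate. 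The remaining verifications, namely that the smaller root lies in $(0,1/\gamma)$ and that $\alpha<3-2\sqrt{2}$ forces $2(1-\gamma t_*)^2-1>0$ so the denominator is positive, are routine sign checks on the quadratic above.
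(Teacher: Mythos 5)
Your proposal is correct and follows essentially the same route as the paper: the same majorant function $f(t)=t/(1-\gamma t)-2t+b$, the second-derivative bound $\|\widehat{F'(x_0)}^{-1}\|\,\|F''(x)\|\le 2\gamma/(1-\gamma\|x-x_0\|)^{3}=f''(\|x-x_0\|)$ (the paper's Lemma~\ref{lem.cond1}), its integration into the majorant condition \eqref{Hyp:MH} (Lemma~\ref{lc}), and a final appeal to Theorem~\ref{th:nt}. The only difference is one of detail: you spell out the discriminant algebra locating $t_*$ and verifying \textbf{h3}--\textbf{h4}, as well as the substitution producing the explicit quadratic constant, all of which the paper leaves implicit in its one-line invocation of Theorem~\ref{th:nt}.
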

Before proving above theorem we need of two results. The next results gives a condition  that is easier to check than condition
\eqref{Hyp:MH}, whenever  the functions under consideration  are twice continuously differentiable, and its proof follows the same path of Lemma~21 of \cite{FerreiraGoncalvesOliveira2011}. 
\begin{lemma}\label{lem.cond1}
Let $\Omega \subset \banachh$ be an open set, and let $F:{\Omega}\to \banachh$ be an analytic function. Suppose that $x_0 \in \Omega$ and $B(x_0, 1/ \gamma)\subset \Omega,$ where $\gamma$ is defined in \eqref{eq:SmaleCond}. Then for all $x\in B(x_0, 1/  \gamma),$ it holds that
$
\|F''(x)\|\leq 2  \gamma/(1-  \gamma\|x-x_0\|)^3.
$
\end{lemma}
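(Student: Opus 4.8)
The plan is to use analyticity to expand $F''$ as an operator-valued power series centred at $x_0$ and then to estimate it term by term by means of the quantity $\gamma$ from \eqref{eq:SmaleCond}. The first step is to unwind the definition of $\gamma$: taking the $(n-1)$-th power of the supremum bound gives, for every integer $n\ge 2$,
\[
\left\|\frac{F^{(n)}(x_0)}{n!}\right\|\le \left(\frac{\gamma}{\|\widehat{F'(x_0)}^{-1}\|}\right)^{\!n-1},
\]
so that $\|F^{(n)}(x_0)\|\le n!\,(\gamma/\|\widehat{F'(x_0)}^{-1}\|)^{n-1}$. This termwise control of the higher derivatives is the only consequence of \eqref{eq:SmaleCond} that I intend to use.

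Since $F$ is analytic on $\Omega$ and $B(x_0,1/\gamma)\subset \Omega$, for each $x\in B(x_0,1/\gamma)$ the Taylor series of $F$ about $x_0$ converges, and I would differentiate it twice term by term to obtain
\[
F''(x)=\sum_{n\ge 2}\frac{1}{(n-2)!}\,F^{(n)}(x_0)(x-x_0)^{n-2}.
\]
Taking norms, using the multilinear bound $\|F^{(n)}(x_0)(x-x_0)^{n-2}\|\le \|F^{(n)}(x_0)\|\,\|x-x_0\|^{n-2}$ together with the estimate from the first step, reduces the problem to summing a scalar series. Reindexing by $m=n-2$ and invoking the elementary identity $\sum_{m\ge 0}(m+2)(m+1)a^{m}=2/(1-a)^{3}$, valid whenever $|a|<1$ — and here the relevant ratio is strictly below $1$ because $x\in B(x_0,1/\gamma)$ — collapses the sum into the closed form $2\gamma/(1-\gamma\|x-x_0\|)^3$, which is the asserted bound. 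The evaluation follows the pattern of Lemma~21 of \cite{FerreiraGoncalvesOliveira2011}.

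The main obstacle is the careful bookkeeping of the normalising factor $\|\widehat{F'(x_0)}^{-1}\|$ and the justification of the termwise operations. Concretely, I must (i) justify that the analytic map may be differentiated twice inside its domain of convergence and that the resulting operator-valued series converges in operator norm uniformly on compact subsets of $B(x_0,1/\gamma)$, which is standard for analytic maps between Hilbert spaces; and (ii) reconcile the geometric ratio $(\gamma/\|\widehat{F'(x_0)}^{-1}\|)\|x-x_0\|$ that the termwise estimate naturally produces with the ratio $\gamma\|x-x_0\|$ appearing in the stated bound, so that, after absorbing the normalising constant through the definition of $\gamma$ and the monotonicity of $a\mapsto 2a/(1-a\|x-x_0\|)^3$, the summation lands exactly on the constant $2\gamma$ in the numerator. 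Once these two points are settled, the remaining closed-form evaluation of the series is purely computational.
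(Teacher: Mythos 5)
Your route---termwise estimation of the twice-differentiated Taylor series followed by the identity $\sum_{m\ge 0}(m+2)(m+1)a^{m}=2/(1-a)^{3}$---is exactly the path the paper intends: its proof is only a pointer to Lemma~21 of \cite{FerreiraGoncalvesOliveira2011}, which proceeds in precisely this way, and your first step and the closed-form summation are fine. The genuine gap is your point (ii), and it cannot be settled the way you propose. Writing $c:=\|\widehat{F'(x_0)}^{-1}\|$, the literal definition \eqref{eq:SmaleCond} gives the termwise bound $\|F^{(n)}(x_0)/n!\|\le (\gamma/c)^{n-1}$, so your summation yields $\|F''(x)\|\le 2(\gamma/c)/\bigl(1-(\gamma/c)\|x-x_0\|\bigr)^{3}$, valid only for $\|x-x_0\|<c/\gamma$. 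The monotonicity of $a\mapsto 2a/(1-a\|x-x_0\|)^{3}$ upgrades the ratio $\gamma/c$ to $\gamma$ only when $\gamma/c\le\gamma$, i.e.\ when $c\ge 1$; likewise your assertion that the relevant ratio is below $1$ on all of $B(x_0,1/\gamma)$ requires $1/\gamma\le c/\gamma$, again $c\ge 1$. Nothing in the hypotheses forces $\|\widehat{F'(x_0)}^{-1}\|\ge 1$, so the ``absorption by monotonicity'' is precisely the step that fails.

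Moreover, the gap is not closable by a cleverer argument, because with the normalizing factor placed outside the $(n-1)$-th root, as in \eqref{eq:SmaleCond}, the stated inequality is simply false when $c<1$. Take $\banachh=\mathbb{R}$, $\Omega=\mathbb{R}$, $x_0=0$ and $F(x)=\lambda(x+x^{3})$ with $\lambda=100$: then $c=10^{-2}$, $\sup_{n>1}\|F^{(n)}(0)/n!\|^{1/(n-1)}=\lambda^{1/2}=10$, hence $\gamma=0.1$ and $B(0,1/\gamma)=B(0,10)$; yet at $x=1$ one has $\|F''(x)\|=6\lambda=600$ while $2\gamma/(1-\gamma)^{3}\approx 0.27$. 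What your computation does prove, verbatim and on the whole ball, is the inequality $\|\widehat{F'(x_0)}^{-1}\|\,\|F''(x)\|\le 2\gamma/(1-\gamma\|x-x_0\|)^{3}$ under the repaired definition $\gamma=\sup_{n>1}\bigl(\|\widehat{F'(x_0)}^{-1}\|\,\|F^{(n)}(x_0)/n!\|\bigr)^{1/(n-1)}$, since then $\|\widehat{F'(x_0)}^{-1}\|\,\|F^{(n)}(x_0)/n!\|\le\gamma^{n-1}$ termwise; and that is exactly the form Lemma~\ref{lc} needs to feed the majorant $f(t)=t/(1-\gamma t)-2t+b$ in Theorem~\ref{theo:Smale}. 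So you correctly isolated the delicate point of the bookkeeping, but the resolution is to amend the statement (as in \cite{FerreiraGoncalvesOliveira2011}, where no normalizing operator appears), not to absorb the constant through monotonicity.
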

The next result  gives a relationship  between  the second derivatives $F''$ and $ f''$,  which allow us to show that  $F$ and $f$ satisfy \eqref{Hyp:MH}, and its proof  is similar to Lemma~22 of \cite{FerreiraGoncalvesOliveira2011}. 
\begin{lemma} \label{lc}
Let $\banachh$ be a Hilbert space, $\Omega\subseteq \banachh$ be an open set,
  $F:{\Omega}\to \banachh$  be twice continuously differentiable. Let $x_0 \in \Omega$, $R>0$  and  $\kappa=\sup\{t\in [0, R): B(x_0, t)\subset \Omega\}$. Let \mbox{$f:[0,R)\to \mathbb {R}$} be twice continuously differentiable such that $ \|\widehat{F'(x_0)}^{-1}\|\|F''(x)\|\leqslant f''(\|x-x_0\|),$
for all $x\in B(x_0, \kappa)$, then $F$ and $f$ satisfy \eqref{Hyp:MH}.
\end{lemma}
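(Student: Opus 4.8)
The plan is to reduce the first-order difference $F'(y)-F'(x)$ to an integral of the second derivative and then compare it termwise with the corresponding scalar integral for $f$, exactly mirroring the argument used in Lemma~\ref{pr:taylor}. First I would fix $x,y\in B(x_0,\kappa)$. Since $B(x_0,\kappa)$ is convex, the entire segment $x+\tau(y-x)$, $\tau\in[0,1]$, lies in $B(x_0,\kappa)$, so the hypothesis $\|\widehat{F'(x_0)}^{-1}\|\,\|F''(x)\|\leq f''(\|x-x_0\|)$ is available at every point of this segment. Because $F$ is twice continuously differentiable, the fundamental theorem of calculus gives
\begin{equation*}
F'(y)-F'(x)=\int_0^1 F''\bigl(x+\tau(y-x)\bigr)(y-x)\,d\tau.
\end{equation*}

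Next I would take norms, multiply through by $\|\widehat{F'(x_0)}^{-1}\|$, and invoke the hypothesis pointwise along the segment, obtaining
\begin{equation*}
\|\widehat{F'(x_0)}^{-1}\|\,\|F'(y)-F'(x)\| \leq \int_0^1 f''\bigl(\|x+\tau(y-x)-x_0\|\bigr)\,\|y-x\|\,d\tau.
\end{equation*}
The argument of $f''$ is then bounded above by $\|x-x_0\|+\tau\|y-x\|$ via the triangle inequality. Here the key structural fact, inherited from the standing hypotheses of Theorem~\ref{th:nt}, is that $f'$ is convex (hypothesis {\bf h2}), so $f''$ is nondecreasing; this lets me replace $f''(\|x+\tau(y-x)-x_0\|)$ by $f''(\|x-x_0\|+\tau\|y-x\|)$ without reversing the inequality.

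Finally I would evaluate the scalar integral explicitly: writing $a=\|x-x_0\|$ and $s=\|y-x\|$,
\begin{equation*}
\int_0^1 f''(a+\tau s)\,s\,d\tau = \int_0^1 \frac{d}{d\tau}f'(a+\tau s)\,d\tau = f'(a+s)-f'(a),
\end{equation*}
which is precisely the right-hand side of \eqref{Hyp:MH}. Combining the three displays yields \eqref{Hyp:MH} for all $x,y\in B(x_0,\kappa)$, as required.

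I would expect the only delicate points to be bookkeeping rather than substance: verifying that the segment remains inside the region where the $F''$ bound holds (handled by convexity of the ball), and justifying the monotonicity step for $f''$, which rests on the convexity of $f'$ already assumed in {\bf h2}. The remainder is a direct computation entirely parallel to Lemma~\ref{pr:taylor}.
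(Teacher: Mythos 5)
Your proof is correct and is exactly the argument the paper invokes by citing Lemma~22 of \cite{FerreiraGoncalvesOliveira2011}: represent $F'(y)-F'(x)$ as $\int_0^1 F''(x+\tau(y-x))(y-x)\,d\tau$, bound pointwise by the hypothesis, pass to $f''(\|x-x_0\|+\tau\|y-x\|)$ using that $f''$ is nondecreasing (convexity of $f'$ from {\bf h2}, which is in force by the standing assumption), and integrate to get $f'(\|x-x_0\|+\|y-x\|)-f'(\|x-x_0\|)$. You also correctly identify the one genuinely needed extra ingredient --- the monotonicity of $f''$, without which the comparison step would fail --- so nothing is missing.
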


\noindent
{\bf [Proof of Theorem \ref{theo:Smale}]}.
Consider $f:[0, 1/ \gamma) \to \mathbb{R}$ defined by $f(t)=t/(1- \gamma t)-2t+b$. Note that $f$ is  analytic and 
$f(0)=b$,   $f'(t)=1/(1- \gamma t)^2-2$, $f'(0)=-1$, $f''(t)=2 \gamma/(1-\gamma t)^3$. 
It follows from the last  equalities  that $f$ satisfies {\bf h1}  and  {\bf h2}.  Combining  Lemma~\ref{lc}  with  Lemma~\ref{lem.cond1}, we conclude  that $F$  and $f$ satisfy  \eqref{Hyp:MH}. Therefore, the result follows by applying the Theorem~\ref{th:nt}.
\qed
\section{Final remarks } \label{rf}
We have obtained  a semi local  convergence result to  Newton's method for solving  generalized equation in Hilbert spaces and  under the majorant condition. The majorant condition  allow to unify several  convergence results pertaining to  Newton's method. Besides, the study of  inexact versions  of this method would be welcome.

\end{document}